\newcommand{\ra}[1]{\renewcommand{\arraystretch}{#1}}
\newcommand{\Mod}[1]{\ (\mathrm{mod}\ #1)}
\newtheorem{theorem}{Theorem}
\newtheorem{lemma}[theorem]{Lemma}
\newtheorem{proposition}[theorem]{Proposition}
\newtheorem{definition}[theorem]{Definition}
\newtheorem{example}[theorem]{Example}
\newtheorem{conjecture}[theorem]{Conjecture}
\numberwithin{theorem}{section}
\numberwithin{equation}{section}
\numberwithin{table}{section}
\newtheorem{remark}[theorem]{Remark}
\newcommand{\cB}{\mathcal{B}}
\newcommand{\cV}{\mathcal{V}}
\newcommand{\al}{\alpha}
\newcommand{\be}{\beta}
\newcommand{\pr}{\prime}
\newcommand{\sm}{\setminus}
\newcommand{\lan}{\langle}
\newcommand{\ran}{\rangle}
\newcommand{\lc}{\lceil}
\newcommand{\rc}{\rceil}
\newcommand{\Tr}{\text{Tr}}
\newcommand{\F}{\mathbb{F}}
\newcommand{\Fp}{\mathbb{F}_p}
\newcommand{\Fpm}{\mathbb{F}_{p^m}}
\newcommand{\Fq}{\mathbb{F}_q}
\newcommand{\STS}{\text{STS}}
\long\def\symbolfootnote[#1]#2{\begingroup%
\def\thefootnote{\fnsymbol{footnote}}\footnote[#1]{#2}\endgroup}
\begin{document}

\title{On the intersection distribution of degree three polynomials and related topics}
\author{Gohar Kyureghyan \and Shuxing Li \and Alexander Pott}
\date{}
\maketitle

\symbolfootnote[0]{
G.~Kyureghyan is with Institute of Mathematics, University of Rostock, 18057 Rostock, Germany (email: gohar.kyureghyan@uni-rostock.de). S.~Li and A.~Pott are with Institute of Algebra and Geometry, Faculty of Mathematics, Otto von Guericke University Magdeburg, 39106 Magdeburg, Germany (e-mail: shuxing\_li@sfu.ca, alexander.pott@ovgu.de).
}

\begin{abstract}
The intersection distribution of a polynomial $f$ over finite field $\Fq$ was recently proposed in Li and Pott (arXiv:2003.06678v1), which concerns the collective behaviour of a collection of polynomials $\{f(x)+cx \mid c \in \Fq\}$. The intersection distribution has an underlying geometric interpretation, which indicates the intersection pattern between the graph of $f$ and the lines in the affine plane $AG(2,q)$. When $q$ is even, the long-standing open problem of classifying o-polynomials can be rephrased in a simple way, namely, classifying all polynomials which have the same intersection distribution as $x^2$. Inspired by this connection, we proceed to consider the next simplest case and derive the intersection distribution for all degree three polynomials over $\Fq$ with $q$ both odd and even. Moreover, we initiate to classify all monomials having the same intersection distribution as $x^3$, where some characterizations of such monomials are obtained and a conjecture is proposed. In addition, two applications of the intersection distributions of degree three polynomials are presented. The first one is the construction of nonisomorphic Steiner triple systems and the second one produces infinite families of Kakeya sets in affine planes with previously unknown sizes.

\smallskip
\noindent \textbf{Keywords.} Classification, degree three polynomial, graph of a function, intersection distribution, Kakeya set in affine plane, monomial, multiplicity distribution, o-polynomial, Steiner triple system.

\noindent {{\bf Mathematics Subject Classification\/}: 11T06, 51E15, 51E10, 05B07.}
\end{abstract}

\section{Introduction}\label{sec1}

Throughout this paper, let $\Fq=\Fpm$ be a finite field with characteristic $p$ and $f$ a polynomial over $\Fq$. The intersection distribution of $f$ originates from an elementary problem concerning the interaction between the graph $\{ (x,f(x)) \mid x \in \Fq \}$ of $f$ and the lines in the classical affine plane $AG(2,q)$. More precisely, for $0 \le i \le q$, we ask about the number of affine lines intersecting the graph of $f$ in exactly $i$ points. Note that there are $q$ vertical affine lines of the form $\{(x,y) \mid y \in \Fq\}$, where $x$ ranges over $\Fq$. Since each of these vertical lines intersects $G_f$ in exactly one point, we shall omit them and restrict to the remaining $q^2$ non-vertical lines. As an attempt to answer this question, the following concept of intersection distribution was proposed in \cite{LP}.

\begin{definition}[Intersection distribution]\label{def-intdis}
For $0 \le i \le q$, define
$$
v_i(f)=|\{(b,c) \in \Fq^2 \mid \mbox{$f(x)-bx-c=0$ has $i$ solutions in $\Fq$} \}|.
$$
The sequence $(v_i(f))_{i=0}^{q}$ is the intersection distribution of $f$. The integer $v_0(f)$ is the non-hitting index of $f$.
\end{definition}

We remark that for $0 \le i \le q$, there are exactly $v_i(f)$ non-vertical lines, which intersect the graph $\{(x,f(x)) \mid x \in \Fq\}$ in exactly $i$ points. In particular, the non-hitting index $v_0(f)$ is the number of affine lines which does not hit the graph of $f$. The intersection distribution of a polynomial $f$ conveys considerable information of $f$. For instance, the non-hitting index $v_0(f)$ measures the distance from $f$ to linear functions, and to the so called o-polynomial (when $q$ is even) or to $x^2$ (when $q$ is odd) \cite[Result 1.7]{LP}. Thus, intersection distribution serves as a new viewpoint to distinguish polynomials, which is different from the usual extended-affine equivalence ({\rm\cite[p. 1142]{BCP}}) and the Carlet-Charpin-Zinoviev equivalence {\cite[Definition 1]{BCP}, \rm\cite[Proposition 3]{CCZ}}. Moreover, the aforementioned geometric interpretation indicates that for the point set in the classical projective plane $PG(2,q)$ arising from a polynomial $f$, detailed information follows from the intersection distribution $f$ (see for instance \cite[Proposition 3.2]{LP}).

Having explained the reason why the intersection distribution is interesting, we proceed to consider its computation. First, we have the following basic equations, which essentially have been stated in \cite[Proposition 2.1]{LP} (see also \cite[Lemma 12.1]{Hirs}).

\begin{proposition}\label{prop-basiceqn}
Let $f$ be a polynomial over $\Fq$. The following equations hold.
\begin{align*}
\sum_{i=0}^{q} v_i(f)&=q^2, \\
\sum_{i=1}^{q} iv_i(f)&=q^2, \\
\sum_{i=2}^{q} i(i-1)v_i(f)&=q(q-1).
\end{align*}
\end{proposition}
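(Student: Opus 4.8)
The plan is to interpret each of the three equations as counting something about the pairs $(b,c) \in \Fq^2$, weighted by the number of solutions of $f(x)-bx-c=0$, and to evaluate these counts in two different ways. The key observation is that for each fixed $x_0 \in \Fq$, the affine lines $y = bx+c$ passing through the point $(x_0,f(x_0))$ are exactly those with $c = f(x_0)-bx_0$; as $b$ ranges over $\Fq$, this parametrizes precisely the $q$ non-vertical lines through that point. This lets me relate the line-counting quantities $v_i(f)$ to solution-counting over the graph of $f$.

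For the first equation, I would simply note that the pairs $(b,c)$ partition $\Fq^2$ according to how many solutions $i$ the equation $f(x)-bx-c=0$ has; since $\sum_{i=0}^q v_i(f)$ counts all pairs $(b,c)$ exactly once and there are $q^2$ such pairs, the identity is immediate. For the second equation, I would count the set $S_1 = \{(x,b,c) \in \Fq^3 \mid f(x)-bx-c=0\}$ in two ways. Summing over $(b,c)$ first and grouping by the number of solutions gives $\sum_{i=1}^q i\,v_i(f)$, since a pair with $i$ solutions contributes $i$ triples. On the other hand, for each of the $q$ choices of $x$ and each of the $q$ choices of $b$, the value $c=f(x)-bx$ is forced, so $|S_1| = q^2$; equating the two expressions yields the second identity. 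I would set $i$'s lower limit to $1$ since pairs with no solution contribute nothing.

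For the third equation, I would count ordered pairs of distinct solutions, that is, the set $S_2 = \{(x_1,x_2,b,c) \in \Fq^4 \mid x_1 \neq x_2,\ f(x_1)-bx_1-c=0,\ f(x_2)-bx_2-c=0\}$. Grouping by $(b,c)$ and by the solution count $i$ gives $\sum_{i=2}^q i(i-1)\,v_i(f)$, since a pair of lines with $i$ common solutions admits $i(i-1)$ ordered pairs of distinct ones. Counting the other way, each choice of distinct $x_1,x_2$ (there are $q(q-1)$ such ordered pairs) forces $b$ and $c$ uniquely: the two linear conditions $f(x_j)=bx_j+c$ determine the unique line through the two distinct points $(x_1,f(x_1))$ and $(x_2,f(x_2))$, so $b=(f(x_1)-f(x_2))/(x_1-x_2)$ and $c$ follows. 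Hence $|S_2|=q(q-1)$, giving the third identity.

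The routine parts are the bookkeeping of the two-way counts. The only point requiring a little care, which I regard as the main (mild) obstacle, is justifying that in $S_2$ each ordered pair of distinct abscissae determines a unique $(b,c)$: this is exactly the statement that two distinct points in $AG(2,q)$ with different $x$-coordinates lie on a unique non-vertical line, and since $x_1 \neq x_2$ the divided difference is well-defined. Once this uniqueness is established, all three identities follow by equating the two evaluations of the respective counting sets.
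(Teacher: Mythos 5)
Your proof is correct: the three identities follow exactly from your double counts of pairs $(b,c)$, of incidences $(x,b,c)$ with $f(x)=bx+c$, and of ordered pairs of distinct points on a common non-vertical line, with the uniqueness of $(b,c)$ for $x_1 \neq x_2$ justified as you say. The paper itself gives no proof, delegating the statement to \cite[Proposition 2.1]{LP} and \cite[Lemma 12.1]{Hirs}, and your incidence double-counting is precisely the standard argument behind those references, so there is nothing to add.
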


Secondly, to facilitate the computation of the intersection distribution, the following definition was proposed in \cite[Definition 1.1(2)]{LP}.

\begin{definition}[Multiplicity distribution]\label{def-muldis}
Let $f$ be a polynomial over $\Fq$. For $b \in \Fq$ and $0 \le i \le q$, define
$$
M_i(f,b)=|\{ c \in \Fq \mid \mbox{$f(x)-bx-c=0$ has $i$ solutions in $\Fq$} \}|.
$$
The sequence $(M_i(f,b))_{i=0}^q$ is the multiplicity distribution of $f$ at $b$. The multiset of sequences $\{(M_i(f,b))_{i=0}^q \mid b \in \Fq\}$ is the multiplicity distribution of $f$.
\end{definition}
By definition, for $0 \le i \le q$, there are exactly $M_i(f,b)$ lines among the parallel class of $q$ affine lines $\{ y=bx+c \mid c \in \Fq \}$, which intersect the graph of $f$ in $i$ points. From now on, we use $\Fq^*$ to denote the set of nonzero elements in $\Fq$.

\begin{remark}\label{rem-normalization}
\quad
\begin{itemize}
\item[(1)] By definition, for a polynomial $f$ over $\Fq$ and $0 \le i \le q$, we have
$$
v_i(f)=\sum_{b \in \Fq}M_i(f,b).
$$
Hence, the multiplicity distribution of $f$ implies its intersection distribution.
\item[(2)] Let $f(x)=\sum_{i=0}^n a_ix^i$, where $n \ge 2$ and $a_n \ne 0$. Note that for each $0 \le i \le q$, we have  $M_i(f,b)=M_i(a_n^{-1}(f-a_1x-a_0),a_n^{-1}(b-a_1))$. Hence, in order to compute the intersection distribution of $f$, one can assume without generality that $a_1=a_0=0$ and $a_n=1$.
\item[(3)] Let $f$ be a permutation polynomial and $f^{-1}$ be its inverse. Clearly, $M_1(f,0)=M_1(f^{-1},0)=q$. Moreover, note that for $b \in \Fq^*$, the two equations $f(x)-bx-c=0$ and $f^{-1}(x)-\frac{1}{b}x+\frac{c}{b}=0$ have the same number of solutions. Hence, $f$ and $f^{-1}$ have the same multiplicity distribution and therefore, the same intersection distribution.
\end{itemize}
\end{remark}

We remark that in general, computing the intersection and multiplicity distributions is a nontrivial problem. In \cite[Appendix B]{LP}, the multiplicity distributions of monomials $x^d$ over $\Fq$, where $d \in \{p^i,p^i+1,\frac{q-1}{2},\frac{q+1}{2},q-2,q-1\}$, have been determined. Indeed, combining \cite[Propositions B.1, B.9]{LP} and Remark~\ref{rem-normalization}(1)(2), we have the following proposition. For the sake of simplicity, from now on, we only list the first few values of the intersection distribution $v_i(f)$ and multiplicity distribution $M_i(f,b)$ with $i$ at most $4$, and the unmentioned values are all zeros.

\begin{proposition}
Let $x^2$ be a monomial over $\Fq$.
\begin{itemize}
\item[(1)] If $p=2$, then
$$
\begin{cases}
  M_0(x^2,0)=0, \quad M_1(x^2,0)=q, \quad M_2(x^2,0)=0, \\
  M_0(x^2,b)=\frac{q}{2}, \quad M_1(x^2,b)=0, \quad M_2(x^2,b)=\frac{q}{2}, & \mbox{if $b \ne 0$.}
\end{cases}
$$
\item[(2)] If $p$ is odd, then for each $b \in \Fq$,
$$
  M_0(x^2,b)=\frac{q-1}{2}, \quad M_1(x^2,b)=1, \quad M_2(x^2,b)=\frac{q-1}{2}.
$$
\end{itemize}
In particular, for each polynomial $f$ over $\Fq$ with degree two, we have
\begin{equation}\label{eqn-intdisdegreetwo}
v_0(f)=\frac{q(q-1)}{2}, \quad v_1(f)=q, \quad v_2(f)=\frac{q(q-1)}{2}.
\end{equation}
\end{proposition}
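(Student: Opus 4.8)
The plan is to compute each $M_i(x^2,b)$ directly by determining, for a fixed $b$, how many values of $c$ make the quadratic $x^2-bx-c=0$ have $0$, $1$, or $2$ roots in $\Fq$; the intersection distribution of a general degree-two polynomial will then follow from Remark~\ref{rem-normalization}.

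For odd $p$, I would complete the square. Setting $u=x-b/2$ and $d=c+b^2/4$, the equation becomes $u^2=d$, and as $c$ runs over $\Fq$ so does $d$, bijectively. Counting roots thus reduces to the standard count for $u^2=d$: one root if $d=0$, two roots if $d$ is a nonzero square, and no root if $d$ is a nonsquare. Since $\Fq$ contains a single zero, $(q-1)/2$ nonzero squares, and $(q-1)/2$ nonsquares, this yields $M_0(x^2,b)=M_2(x^2,b)=(q-1)/2$ and $M_1(x^2,b)=1$ for every $b$.

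For $p=2$ I would separate the case $b=0$ from $b\neq 0$. When $b=0$ the map $x\mapsto x^2$ is the Frobenius automorphism, hence a bijection, so $x^2=c$ has exactly one root for each $c$ and $M_1(x^2,0)=q$. When $b\neq 0$, using $-1=1$ the equation reads $x^2+bx+c=0$; the substitution $x=by$ turns it into the Artin--Schreier form $y^2+y=c/b^2$, which has two roots when $\Tr(c/b^2)=0$ and none when $\Tr(c/b^2)=1$. Because the absolute trace is balanced and $c\mapsto c/b^2$ permutes $\Fq$, exactly $q/2$ choices of $c$ give two roots and $q/2$ give none, so $M_0(x^2,b)=M_2(x^2,b)=q/2$.

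Finally, summing over $b$ through $v_i(f)=\sum_{b\in\Fq}M_i(f,b)$ collapses, in both parities, to $v_0=v_2=q(q-1)/2$ and $v_1=q$; the normalization in Remark~\ref{rem-normalization}(2) guarantees that every degree-two polynomial has the same intersection distribution as $x^2$, so \eqref{eqn-intdisdegreetwo} follows at once. I do not anticipate a genuine obstacle here: the computation is elementary, and the only point demanding care is the characteristic-two case, where one must recall that $-b=b$ and treat the degenerate direction $b=0$ via the Frobenius bijection separately from the generic directions governed by the trace.
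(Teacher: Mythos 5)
Your proof is correct, and it is genuinely more self-contained than what the paper does. The paper does not prove the multiplicity distributions at all: it imports them from the earlier work of Li and Pott (Propositions B.1 and B.9 of that paper, covering monomials $x^{p^i+1}$ and related cases) together with Remark~\ref{rem-normalization}(1)(2), and for the ``in particular'' claim it observes that a degree-two $f$ forces $v_i(f)=0$ for all $i>2$, whereupon the three linear equations of Proposition~\ref{prop-basiceqn} already determine $(v_0,v_1,v_2)$ uniquely as $\bigl(\tfrac{q(q-1)}{2},\,q,\,\tfrac{q(q-1)}{2}\bigr)$ with no counting whatsoever. You instead compute everything from scratch: completing the square and the square/nonsquare count in odd characteristic, and the Frobenius bijection for $b=0$ plus the Artin--Schreier substitution $x=by$, $y^2+y=c/b^2$ with the balanced-trace count for $b\ne 0$ in characteristic two; you then sum over $b$ and invoke the normalization of Remark~\ref{rem-normalization}(2) to pass to an arbitrary degree-two polynomial. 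Both routes are valid. Yours buys a complete, elementary, citation-free argument (and your characteristic-two trace analysis is exactly the kind of computation the cited appendix contains); the paper's route buys brevity, and its derivation of \eqref{eqn-intdisdegreetwo} from Proposition~\ref{prop-basiceqn} alone is worth internalizing, since it is precisely the mechanism the paper reuses for degree three, where one value $v_i(f)$ determines the rest.
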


Consequently, the intersection distribution of polynomials with degree two is clear. We remark that $f$ having degree two forces $v_i(f)=0$ for each $i>2$, so that the intersection distribution \eqref{eqn-intdisdegreetwo} follows from Proposition~\ref{prop-basiceqn}. A natural question is, if we drop the degree two condition, is there any other polynomial which has intersection distribution \eqref{eqn-intdisdegreetwo}? Historically, this problem has been intensively studied in terms of classifying ovals or hyperovals in the classical projective planes (see \cite[Chapter 8]{Hirs} for instance). When $q$ is odd, a famous result due to Segre \cite{Seg} indicates each polynomial $f$ satisfying \eqref{eqn-intdisdegreetwo} is in some sense equivalent to $x^2$. On the other hand, when $q$ is even, the situation is much more subtle. In this case, a polynomial $f$ with the same intersection distribution as $x^2$ is called an \emph{o-polynomial}. The classification of o-polynomials, especially o-monomials, is a long-standing problem which has attracted much attention (see \cite{CS,Che,Hirs,Vis} and the references therein).


In this paper, we pursue a result analogous to the one stated above. More precisely, we take one step forward to consider the intersection distribution of degree three polynomials. This is the next simplest case as the degree three condition ensures that $v_i(f)=0$ for each $i>3$. Together with Proposition~\ref{prop-basiceqn}, the intersection distribution of each degree three polynomial $f$ can be determined by exactly one of $v_i(f)$, $0 \le i \le 3$. By Remark~\ref{rem-normalization}(2), it suffices to determine the intersection distribution of $x^3-ax^2$ for each $a \in \Fq$, and we have the following complete description.

\begin{theorem}\label{thm-cubicintdis}
Let $f(x)=x^3-ax^2$ be a polynomial over $\Fq$. If $p \ne 3$, then we have
\begin{equation}\label{eqn-intdisdeg3othchar}
v_0(f)=\frac{q^2-1}{3}, \quad v_1(f)=\frac{q^2-q+2}{2}, \quad v_2(f)=q-1, \quad v_3(f)=\frac{q^2-3q+2}{6}.
\end{equation}
If $p=3$ and $a=0$, then we have
\begin{equation}\label{eqn-intdisdeg3char3}
v_0(f)=\frac{q(q-1)}{3}, \quad v_1(f)=\frac{q(q+1)}{2}, \quad v_2(f)=0, \quad v_3(f)=\frac{q(q-1)}{6}.
\end{equation}
If $p=3$ and $a \ne 0$, then we have
$$
v_0(f)=\frac{q^2}{3}, \quad v_1(f)=\frac{q(q-1)}{2}, \quad v_2(f)=q, \quad v_3(f)=\frac{q(q-3)}{6}.
$$
\end{theorem}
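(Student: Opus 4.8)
The plan is to exploit the fact that, by Proposition~\ref{prop-basiceqn} together with the degree-three hypothesis (which forces $v_i(f)=0$ for all $i>3$), the four unknowns $v_0(f),v_1(f),v_2(f),v_3(f)$ satisfy three independent linear relations. Hence it suffices to compute a single one of them directly, and the cleanest choice is $v_3(f)$. First I would set up the Vieta dictionary: for $(b,c)\in\Fq^2$ the polynomial $g_{b,c}(x)=x^3-ax^2-bx-c$ is monic of degree three, and the assignment $(b,c)\mapsto g_{b,c}$ is a bijection onto the set of monic cubics whose $x^2$-coefficient equals $-a$, i.e.\ whose three roots (in the algebraic closure, counted with multiplicity) sum to $a$. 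In particular $g_{b,c}$ has three distinct roots in $\Fq$ exactly when those roots form a $3$-subset of $\Fq$ summing to $a$, so that $v_3(f)$ equals the number of such $3$-subsets.

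To count these $3$-subsets I would first count ordered triples. The number of ordered triples $(r_1,r_2,r_3)\in\Fq^3$ with $r_1+r_2+r_3=a$ is $q^2$, since $r_1,r_2$ are free and $r_3$ is then determined. I would remove the triples with a repeated entry by inclusion--exclusion over the three events $r_i=r_j$: each such event contributes $q$ triples, while each pairwise intersection as well as the triple intersection coincides with the ``all equal'' locus $r_1=r_2=r_3=s$ subject to $3s=a$. Writing $E=|\{s\in\Fq \mid 3s=a\}|$, inclusion--exclusion gives $3q-2E$ bad ordered triples, hence $q^2-3q+2E$ ordered triples of distinct entries, and therefore $v_3(f)=\tfrac{1}{6}(q^2-3q+2E)$.

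The only case distinction enters through $E$, the number of solutions of $3s=a$ in $\Fq$. When $p\neq3$ the map $s\mapsto 3s$ is a bijection, so $E=1$ and $v_3(f)=\tfrac{(q-1)(q-2)}{6}$; when $p=3$ one has $3s=0$ identically, so $E=q$ if $a=0$ and $E=0$ if $a\neq0$, yielding $v_3(f)=\tfrac{q(q-1)}{6}$ and $v_3(f)=\tfrac{q(q-3)}{6}$ respectively. These are precisely the three values of $v_3(f)$ asserted in the theorem, and this is the only place where characteristic three and the vanishing of $a$ play a role.

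Finally I would solve the linear system from Proposition~\ref{prop-basiceqn}. From $\sum_{i\ge2}i(i-1)v_i(f)=q(q-1)$ and $v_i(f)=0$ for $i>3$ one gets $v_2(f)=\tfrac{q(q-1)}{2}-3v_3(f)$; substituting into $\sum_{i\ge1}iv_i(f)=q^2$ gives $v_1(f)=q+3v_3(f)$; and the total count $\sum_i v_i(f)=q^2$ then gives $v_0(f)=\tfrac{q(q-1)}{2}-v_3(f)$. Plugging in the three values of $v_3(f)$ from the previous step reproduces \eqref{eqn-intdisdeg3othchar}, \eqref{eqn-intdisdeg3char3}, and the final displayed formula, completing the proof. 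I do not anticipate a genuine obstacle: the argument is entirely elementary, and the only points demanding care are keeping the bookkeeping for $E$ straight across the three cases and checking that ``number of solutions'' in Definition~\ref{def-intdis} is counted without multiplicity, so that the $3$-subset interpretation of $v_3(f)$ is exact.
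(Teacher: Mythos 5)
Your proposal is correct, and every step checks out: the Vieta bijection between pairs $(b,c)$ and monic cubics with $x^2$-coefficient $-a$, the count $q^2-3q+2E$ of ordered triples of distinct elements summing to $a$ (the inclusion--exclusion giving $3q-2E$ bad triples is right, since all pairwise and triple intersections collapse to the locus $3s=a$), the three values of $E$, and the resolution of the linear system $v_2=\tfrac{q(q-1)}{2}-3v_3$, $v_1=q+3v_3$, $v_0=\tfrac{q(q-1)}{2}-v_3$ all reproduce the stated distributions. However, this is a genuinely different route from the paper's. The paper proves Theorem~\ref{thm-cubicintdis} as a corollary of the much finer Theorem~\ref{thm-cubicmuldis}, which determines the full multiplicity distribution $(M_i(f,b))_i$ for \emph{every} slope $b$; that computation (Lemmas~\ref{lem-cubicreduction}, \ref{lem-cubiczero} and \ref{lem-cubicspecial}) requires reducing to the equations $x^3-\be x-c=0$ and $x^3-x^2-c=0$, analyzing discriminants via cyclotomic numbers of order $2$ and trace conditions, and splitting into congruence classes of $p$ modulo $12$. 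The intersection distribution then follows by summing over $b$. Your argument buys brevity and uniformity: a single elementary count of $3$-subsets of $\Fq$ with prescribed sum, with the case distinction isolated in the quantity $E$, and no cyclotomy at all. What it does not buy is the per-slope data: the multiplicity distribution is strictly stronger information, and the paper needs it later --- the Kakeya set sizes in Theorem~\ref{thm-cubicKakeya} depend on $M_0(f,b)$ through Proposition~\ref{prop-intdisKconnection}, and Theorem~\ref{thm-intdis} also draws on Theorem~\ref{thm-cubicmuldis}. So your proof is a clean, self-contained derivation of Theorem~\ref{thm-cubicintdis} itself (indeed the paper's remark that the degree-three condition plus Proposition~\ref{prop-basiceqn} reduces everything to one value of $v_i$ anticipates exactly your strategy), but it could not replace Section~\ref{sec2} wholesale without losing the refined statements the applications rely on.
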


In order to derive the above theorem, we present a detailed computation determining the multiplicity distribution of degree three polynomial $x^3-ax^2$ in Section ~\ref{sec2}. To achieve this, we consider the number of $\Fq$-solutions to
$$
x^3-\be x-c=0, \quad \mbox{if $p \ne 3$,} \quad \mbox{or} \quad x^3-x^2-c=0, \quad \mbox{if $p=3$,}
$$
where $c \in \Fq$ and $\be$ is either $1$ or a primitive element of $\Fq$. When the equation has at least one solution $x_0 \in \Fq$, we give a characterization of the number of $\Fq$-solutions in terms of $x_0$. In Section~\ref{sec3}, we proceed to consider a much more challenging problem, namely, determining all monomials which has the same intersection distribution as $x^3$. Although a complete answer is by far elusive, we make some detailed analysis and present strong restrictions to these monomials. Moreover, based on the numerical experiment, we propose a conjecture classifying all monomials having the same intersection distribution as $x^3$. As an application, in Section~\ref{sec4}, we observe that polynomials over $\F_{3^m}$ with intersection distribution \eqref{eqn-intdisdeg3char3} produces Steiner triples systems. Interestingly, some numerical results indicate that certain distinct monomials satisfying \eqref{eqn-intdisdeg3char3} generate nonisomorphic Steiner triple systems. In Section~\ref{sec5}, applying the multiplicity distribution of $x^3-ax^2$, we construct several infinite families of Kakeya sets in affine planes, whose sizes are different from the known ones. Section~\ref{sec6} concludes the paper.

\section{The multiplicity and intersection distributions of degree three polynomials}\label{sec2}

In this section, we consider the multiplicity distribution of degree three polynomial. In view of Remark~\ref{rem-normalization}(2), we only need to consider the degree three polynomial of the form $x^3-ax^2$, where $a \in \Fq$.

From now on, we always denote a primitive element of finite field $\Fq$ by $\al$. Given a finite field $\Fq$ and a positive integer $N \mid q-1$, we use $C_0^{(N,q)}$ to denote the set consisting of nonzero $N$-th powers in $\Fq$. Suppose $\al$ is a primitive element of $\Fq$, then for $0 \le i \le N-1$, define $C_i^{(N,q)}=\al^iC_0^{(N,q)}=\{\al^ix \mid x \in C_0^{(N,q)}\}$. Hence, when $q$ is odd, we know that $C_0^{(2,q)}$ is the set of nonzero squares and $C_1^{(2,q)}$ is the set of nonsquares in $\Fq$.

The following proposition says, roughly speaking, in order to determine the multiplicity distribution of $x^3-ax^2$, it suffices to compute $M_i(x^3,0)$, $M_i(x^3,1)$, $M_i(x^3,\al)$ and when $p=3$, also $M_i(x^3-x^2,0)$.

\begin{lemma}\label{lem-cubicreduction}
\quad
\begin{itemize}
\item[(1)] When $p=2$, we have
$$
M_i(x^3,b)=\begin{cases}
                  M_i(x^3,0), & \mbox{if $b=0$,} \\
                  M_i(x^3,1), & \mbox{if $b \ne 0$,} \\
\end{cases}
$$
and
$$
M_i(x^3-ax^2,b)=M_i(x^3,\frac{b}{a^2}+1)=\begin{cases}
                  M_i(x^3,0), & \mbox{if $\frac{b}{a^2}=1$,} \\
                  M_i(x^3,1), & \mbox{if $\frac{b}{a^2} \ne 1$,} \\
\end{cases}
$$
where $a \ne 0$ and $b \in \Fq$.
\item[(2)] When $p=3$, we have
$$
M_i(x^3,b)=\begin{cases}
                  M_i(x^3,0), & \mbox{if $b=0$,} \\
                  M_i(x^3,1), & \mbox{if $b \in C_0^{(2,q)}$,} \\
                  M_i(x^3,\al), & \mbox{if $b \in C_1^{(2,q)}$.} \\
\end{cases}
$$
For $a \ne 0$ and $b \in \Fq$, we have $M_i(x^3-ax^2,b)=M_i(x^3-x^2,0)$.
\item[(3)] When $p>3$, we have
$$
M_i(x^3,b)=\begin{cases}
                  M_i(x^3,0), & \mbox{if $b=0$,} \\
                  M_i(x^3,1), & \mbox{if $b \in C_0^{(2,q)}$,} \\
                  M_i(x^3,\al), & \mbox{if $b \in C_1^{(2,q)}$.} \\
\end{cases}
$$
and
$$
M_i(x^3-ax^2,b)=M_i(x^3,\frac{b}{a^2}+\frac{1}{3})=\begin{cases}
                  M_i(x^3,0), & \mbox{if $\frac{b}{a^2}=-\frac{1}{3}$,} \\
                  M_i(x^3,1), & \mbox{if $\frac{b}{a^2}+\frac{1}{3} \in C_0^{(2,q)}$,} \\
                  M_i(x^3,\al), & \mbox{if $\frac{b}{a^2}+\frac{1}{3} \in C_1^{(2,q)}$,}
\end{cases}
$$
where $a \ne 0$ and $b \in \Fq$.
\end{itemize}
\end{lemma}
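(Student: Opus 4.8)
The plan is to exploit a substitution invariance of the multiplicity distribution. The starting observation is that $M_i(f,b)$ equals the number of $c \in \Fq$ for which the fiber $g^{-1}(c)$ has exactly $i$ elements, where $g(x)=f(x)-bx$; thus $M_i(f,b)$ depends only on the value-multiset $\{\!\{ g(x) : x \in \Fq \}\!\}$, that is, on how many elements of $\Fq$ are attained exactly $i$ times by $g$. If $\lambda,\nu \in \Fq^*$ and $\mu,\eta \in \Fq$, then replacing $g$ by $\hat g(u)=\nu\,g(\lambda u + \mu)+\eta$ precomposes with a bijection of the domain and postcomposes with the bijection $c \mapsto \nu c + \eta$ of the range; consequently $\hat g$ attains each value exactly as often as $g$ attains the corresponding value, so $\hat g$ has the same sequence $(M_i)$ as $g$. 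This is a mild strengthening of Remark~\ref{rem-normalization}(2) that additionally permits the shift $x \mapsto x+\mu$, and it is essentially the only tool needed: in each case I will choose $\lambda,\mu,\nu,\eta$ so that $\hat g$ becomes a standard polynomial $u^3-b'u$ or $u^3-u^2$, whose sequence is by definition $M_i(x^3,b')$ or $M_i(x^3-x^2,0)$.

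First I would handle the reductions for $M_i(x^3,b)$, which are common to all three characteristics. Here $g(x)=x^3-bx$, and the pure scaling $x=\lambda u$ turns it into $\lambda^3\bigl(u^3-b\lambda^{-2}u\bigr)$, whose sequence is $M_i(x^3,b\lambda^{-2})$. As $\lambda$ ranges over $\Fq^*$, the factor $\lambda^{-2}$ sweeps out all nonzero squares, so $b\lambda^{-2}$ runs through the square-coset of $b$. When $p$ is odd this yields $M_i(x^3,1)$ if $b \in C_0^{(2,q)}$ (take $\lambda^2=b$) and $M_i(x^3,\al)$ if $b \in C_1^{(2,q)}$ (take $\lambda^2=b/\al$); when $p=2$ the squaring map is a bijection, every nonzero $b$ is a square, and the unique square root gives $M_i(x^3,1)$. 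The case $b=0$ is left untouched.

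Next I would remove the quadratic term from $g(x)=x^3-ax^2-bx$ with $a\neq 0$, and this is where the three characteristics diverge. When $p>3$ the classical depression $x=u+\tfrac{a}{3}$ clears the $u^2$ term and leaves $u^3-\bigl(\tfrac{a^2}{3}+b\bigr)u$ up to an additive constant; a subsequent scaling $u=av$ rescales the linear coefficient to $\tfrac{b}{a^2}+\tfrac13$, giving $M_i(x^3,\tfrac{b}{a^2}+\tfrac13)$, after which the square-coset analysis above applies. When $p=2$ the analogous substitution is $x=a(u+1)$: expanding with $3=1$ and $2=0$ collapses the quadratic term and produces $a^3\bigl(u^3+(1+\tfrac{b}{a^2})u\bigr)$ plus a constant, so the sequence is $M_i(x^3,\tfrac{b}{a^2}+1)$, and since squaring is bijective only the two subcases $\tfrac{b}{a^2}=1$ and $\tfrac{b}{a^2}\neq 1$ survive. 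The case $p=3$ is genuinely different: because $x\mapsto x^3$ is the Frobenius and hence additive, the shift $x\mapsto x+\mu$ introduces no new linear term from $x^3$, so it is the linear coefficient (not the quadratic one) that can be cleared, via $\mu=b/a$, while the quadratic term is immovable. After the substitution $x=au+b/a$ one is left with $u^3-u^2$ up to constants, which is exactly $M_i(x^3-x^2,0)$, independent of both $a$ and $b$.

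The routine parts are the polynomial expansions and the verification that each chosen substitution clears the intended term; the one point demanding care is the bookkeeping that distinguishes the characteristics. In particular I must track which coefficient the shift $x\mapsto x+\mu$ is able to eliminate — the quadratic coefficient when $p\neq 3$ but the linear coefficient when $p=3$ — since this is precisely the structural reason why all the $a\neq 0$, $p=3$ cases collapse to the single value $M_i(x^3-x^2,0)$, whereas the other characteristics retain dependence on $b/a^2$. The square/nonsquare dichotomy in the odd-characteristic reductions likewise rests on the observation that $\lambda^{-2}$ sweeps out exactly the nonzero squares, forcing the target coefficient into one of the two representatives $1$ or $\al$.
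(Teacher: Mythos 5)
Your proof is correct and takes essentially the same route as the paper: both reduce $x^3-ax^2-bx-c$ to a standard form via an affine substitution (scaling by $a$ together with a shift, where the shift clears the quadratic term when $p\neq 3$ but the linear term when $p=3$), and both rest on the observation that the constant term then sweeps all of $\Fq$ --- which is precisely your invariance principle. The only differences are cosmetic: you perform the shift and scaling in the opposite order in places, and you spell out the square-coset scaling argument for $M_i(x^3,b)$, which the paper dismisses as clear.
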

\begin{proof}
In all the three cases, the expression of $M_i(x^3,b)$ is clear. So we only need to consider $M_i(x^3-ax^2,b)$ with $a \ne 0$. For this purpose, we consider the number of solutions in $\Fq$ to the equation $x^3-ax^2-bx-c=0$.

If $p=2$ or $p>3$, namely, $\gcd(3,q)=1$, dividing $a^3$ on both sides and replacing $\frac{x}{a}$ with $x$, we have $x^3-x^2-\frac{b}{a^2}x-\frac{c}{a^3}=0$. Replacing $x$ with $x+\frac{1}{3}$ in the latter equation leads to $x^3-(\frac{b}{a^2}+\frac{1}{3})x-(\frac{c}{a^3}+\frac{b}{3a^2}+\frac{2}{27})=0$. Note that fixing $a$ and $b$, when $c$ ranges over $\Fq$, so does $\frac{c}{a^3}+\frac{b}{3a^2}+\frac{2}{27}$. Hence, $M_i(x^3-ax^2,b)=M_i(x^3,\frac{b}{a^2}+\frac{1}{3})$. Note that when $p=2$, $M_i(x^3,\frac{b}{a^2}+1)=M_i(x^3,\frac{b}{a^2}+\frac{1}{3})$. The rest follows easily.

If $p=3$, namely, $\gcd(3,q)=3$, dividing $a^3$ on both sides and replacing $\frac{x}{a}$ with $x$, we have $x^3-x^2-\frac{b}{a^2}x-\frac{c}{a^3}=0$. Replacing $x$ with $x-\frac{b}{2a^2}$, we have $x^3-x^2-(\frac{c}{a^3}-\frac{b^2}{4a^4}+\frac{b^3}{8a^6})=0$. Note that fixing $a$ and $b$, when $c$ ranges over $\Fq$, so does $\frac{c}{a^3}-\frac{b^2}{4a^4}+\frac{b^3}{8a^6}$. Hence, $M_i(x^3-ax^2,b)=M_i(x^3-x^2,0)$.
\end{proof}

Note that $M_i(x^3,0)$ is easy to compute. Moreover, when $p=3$, $M_i(x^3,1)$ and $M_i(x^3,\al)$ are also straightforward.

\begin{lemma}\label{lem-cubiczero}
\begin{itemize}
\item[(1)] When $p=2$, we have
$$
\begin{cases}
M_0(x^3,0)=0, \quad M_1(x^3,0)=q, \quad M_2(x^3,0)=0, \quad M_3(x^3,0)=0, & \mbox{if $m$ odd,} \\
M_0(x^3,0)=\frac{2(q-1)}{3}, \quad M_1(x^3,0)=1, \quad M_2(x^3,0)=0, \quad M_3(x^3,0)=\frac{q-1}{3}, & \mbox{if $m$ even.}
\end{cases}
$$
\item[(2)] When $p=3$, we have
\begin{align*}
&M_0(x^3,0)=0, \quad M_1(x^3,0)=q, \quad M_2(x^3,0)=0, \quad M_3(x^3,0)=0, \\
&M_0(x^3,1)=\frac{2q}{3}, \quad M_1(x^3,1)=0, \quad M_2(x^3,1)=0, \quad M_3(x^3,1)=\frac{q}{3}, \\
&M_0(x^3,\al)=0, \quad M_1(x^3,\al)=q, \quad M_2(x^3,\al)=0, \quad M_3(x^3,\al)=0.
\end{align*}
\item[(3)] When $p>3$, we have
$$
\begin{cases}
M_0(x^3,0)=\frac{2(q-1)}{3}, \quad M_1(x^3,0)=1, \quad M_2(x^3,0)=0, \quad M_3(x^3,0)=\frac{q-1}{3}, & \mbox{if $q \equiv 1 \Mod{3}$,} \\
M_0(x^3,0)=0, \quad M_1(x^3,0)=q, \quad M_2(x^3,0)=0, \quad M_3(x^3,0)=0, & \mbox{if $q \equiv 2 \Mod{3}$.}
\end{cases}
$$
\end{itemize}
\end{lemma}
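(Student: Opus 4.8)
The plan is to count, for each of the three cases, the number of values $c \in \Fq$ for which $x^3 - bx - c = 0$ (with the relevant $b$) has exactly $i$ roots in $\Fq$, and then read off $M_i$. The unifying idea is that the number of roots of $x^3 - bx - c$ is governed by the image and fibre structure of the map $x \mapsto x^3 - bx$ on $\Fq$, so in each case I would analyze this map directly.

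First I would dispose of $M_i(x^3,0)$, which records the fibre sizes of the cube map $\phi(x) = x^3$. If $\gcd(3, q-1) = 1$ — which happens exactly when $q \equiv 2 \Mod 3$ for $p > 3$, and always when $p = 2$ and $m$ is odd — then $\phi$ is a bijection, so every fibre has size $1$, giving $M_1(x^3,0) = q$ and all other $M_i = 0$. If $3 \mid q-1$, then $\phi$ is three-to-one onto the subgroup $C_0^{(3,q)}$ of nonzero cubes together with fixing $0$; thus the image (excluding $0$) has size $\frac{q-1}{3}$, each nonzero cube has a fibre of size $3$, the value $0$ has a fibre of size $1$, and the remaining $q - 1 - \frac{q-1}{3} = \frac{2(q-1)}{3}$ values are not hit. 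This yields $M_3 = \frac{q-1}{3}$, $M_1 = 1$, $M_0 = \frac{2(q-1)}{3}$, matching both the $p=2$, $m$ even case and the $q \equiv 1 \Mod 3$ case in (3).

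The characteristic-three subcase in (2) is where the structure differs and is the step I expect to require the most care. Here $x^3$ is the Frobenius (hence additive and bijective), so $M_i(x^3,0)$ is immediate: every fibre is a singleton, giving $M_1(x^3,0) = q$. For $M_i(x^3,\al)$ with $\al$ a primitive element, I would study $x^3 - \al x - c = 0$; since $x \mapsto x^3$ is additive in characteristic $3$, the map $x \mapsto x^3 - \al x$ is $\Fp$-linear, so its kernel consists of the roots of $x^3 - \al x = x(x^2 - \al)$, and I would show this kernel is trivial because $\al$ being a nonsquare (primitive) forces $x^2 = \al$ to have no solution. A trivial kernel makes $x \mapsto x^3 - \al x$ a bijection, yielding $M_1(x^3,\al) = q$. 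By contrast, for $M_i(x^3,1)$ the map $x \mapsto x^3 - x$ has kernel equal to the roots of $x(x^2-1) = x(x-1)(x+1)$, which is the subfield $\F_3 = \{0,1,-1\}$ of size $3$; hence this additive map is three-to-one onto an image of size $\frac{q}{3}$, giving $M_3(x^3,1) = \frac{q}{3}$ and $M_0(x^3,1) = \frac{2q}{3}$.

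The main obstacle is thus purely the characteristic-three analysis, and its resolution hinges on recognizing that $x^3 - bx$ is $\Fp$-linear when $p=3$, which reduces every count to a kernel computation; the cases $p=2$ and $p>3$ reduce instead to the elementary fibre count of the cube map. As a consistency check, I would verify that in each case $\sum_i M_i = q$ and $\sum_i i M_i = q$, both of which follow from Proposition~\ref{prop-basiceqn} specialized to a single parallel class.
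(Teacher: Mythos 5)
Your proof is correct, and all three cases check out: the $\gcd(3,q-1)$ fibre count of the cube map handles parts (1) and (3), and the $\F_3$-linearity of $x \mapsto x^3-bx$ (with kernel $\{0\}$ for $b=\al$ a nonsquare, and kernel $\F_3$ for $b=1$) handles part (2). The paper itself omits any proof of this lemma, dismissing it as straightforward, and your argument is exactly the standard reasoning the authors leave implicit.
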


Now we introduce the concept of cyclotomic number, which will be used later. For $0 \le i,j \le 1$, define the cyclotomic numbers of order $2$ as
$$
(i,j)_q=|(1+C_i^{(2,q)}) \cap C_j^{(2,q)}|.
$$
The cyclotomic numbers of order $2$ are well known, see for instance \cite{Sto}.

\begin{lemma}\label{lem-cyclotomy}
Let $q$ be an odd prime power. If $q \equiv 1 \Mod{4}$, we have
$$
(0,0)_q=\frac{q-5}{4}, \quad (0,1)_q=(1,0)_q=(1,1)_q=\frac{q-1}{4}.
$$
If $q \equiv 3 \Mod{4}$, we have
$$
(0,1)_q=\frac{q+1}{4}, \quad (0,0)_q=(1,0)_q=(1,1)_q=\frac{q-3}{4}.
$$
\end{lemma}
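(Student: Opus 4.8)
The plan is to evaluate each cyclotomic number $(i,j)_q$ by means of the quadratic character rather than by direct counting. Let $\eta$ denote the quadratic character of $\Fq$, normalized so that $\eta(x)=1$ for $x \in C_0^{(2,q)}$, $\eta(x)=-1$ for $x \in C_1^{(2,q)}$, and $\eta(0)=0$. For nonzero $x$ the indicator of the event $x \in C_i^{(2,q)}$ is then $\tfrac{1}{2}\bigl(1+(-1)^i\eta(x)\bigr)$. Since the defining condition $x \in C_i^{(2,q)}$ together with $1+x \in C_j^{(2,q)}$ automatically excludes $x=0$ and $x=-1$, I would write
$$(i,j)_q=\frac{1}{4}\sum_{\substack{x \in \Fq \\ x \neq 0,-1}}\bigl(1+(-1)^i\eta(x)\bigr)\bigl(1+(-1)^j\eta(1+x)\bigr).$$

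Expanding the product splits this into four sums. The constant term contributes the number $q-2$ of admissible $x$. The two linear terms are $\sum_{x \neq 0,-1}\eta(x)$ and $\sum_{x \neq 0,-1}\eta(1+x)$; using $\sum_{x \in \Fq^*}\eta(x)=0$, the first equals $-\eta(-1)$, and after the substitution $x \mapsto x-1$ the second becomes $\sum_{y \neq 0,1}\eta(y)=-\eta(1)=-1$. The cross term is $\sum_{x \neq 0,-1}\eta(x)\eta(1+x)=\sum_{x \neq 0,-1}\eta\bigl(x(1+x)\bigr)$, and since $x(1+x)$ vanishes precisely at $x=0,-1$ this extends to the full sum $\sum_{x \in \Fq}\eta(x^2+x)$.

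The one substantive step is therefore the evaluation of $\sum_{x \in \Fq}\eta(x^2+x)$. I would compute it directly: for $x \neq 0$ one has $\eta\bigl(x(1+x)\bigr)=\eta\bigl(x^2(1+x^{-1})\bigr)=\eta(1+x^{-1})$, and as $x$ runs through $\Fq^*$ so does $x^{-1}$, so $1+x^{-1}$ runs through $\Fq \setminus \{1\}$; hence the sum equals $\sum_{t \neq 1}\eta(t)=-\eta(1)=-1$. (This is just the standard value $-\eta(1)$ of a quadratic character sum over a quadratic with nonzero discriminant.) Substituting the three evaluated sums yields the compact expression
$$(i,j)_q=\frac{1}{4}\Bigl(q-2-(-1)^i\eta(-1)-(-1)^j-(-1)^{i+j}\Bigr).$$

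To finish, I would split according to $\eta(-1)=(-1)^{(q-1)/2}$: when $q \equiv 1 \Mod{4}$ we have $\eta(-1)=1$, and when $q \equiv 3 \Mod{4}$ we have $\eta(-1)=-1$. Reading off the four sign patterns $(i,j)\in\{0,1\}^2$ in each case reproduces exactly the stated values $\tfrac{q-5}{4},\tfrac{q-1}{4}$ and $\tfrac{q+1}{4},\tfrac{q-3}{4}$, respectively. There is no genuine obstacle here; the computation is classical, and the only point demanding care is the bookkeeping of the excluded points $x=0$ and $x=-1$, which supplies the boundary corrections $-\eta(-1)$ and $-1$ in the two linear sums and must be handled consistently so that the final constants come out correctly.
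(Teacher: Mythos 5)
Your proof is correct: every step checks out, and substituting $\eta(-1)=1$ (for $q \equiv 1 \Mod{4}$) and $\eta(-1)=-1$ (for $q \equiv 3 \Mod{4}$) into your closed formula
$$
(i,j)_q=\frac{1}{4}\Bigl(q-2-(-1)^i\eta(-1)-(-1)^j-(-1)^{i+j}\Bigr)
$$
does reproduce all eight stated values. It is worth noting, however, that the paper does not prove this lemma at all: it treats the cyclotomic numbers of order $2$ as classical and simply cites Storer's book, so your argument is a genuine addition rather than a parallel to an existing proof. What your route buys is self-containedness and a method that scales: the same indicator-function expansion with characters of order $e$ reduces cyclotomic numbers of order $e$ to Jacobi sums, which is essentially how the general theory in the cited reference proceeds; for $e=2$ the relevant Jacobi-type sum collapses to the elementary evaluation $\sum_{x\in\Fq}\eta(x^2+x)=-1$ that you carried out by the substitution $x\mapsto x^{-1}$, so no machinery beyond multiplicativity of $\eta$ and $\sum_{x\in\Fq^*}\eta(x)=0$ is needed. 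The two points that demand care — that the conditions $x\in C_i^{(2,q)}$ and $1+x\in C_j^{(2,q)}$ exclude precisely $x=0,-1$, and that the boundary corrections $-\eta(-1)$ and $-1$ enter the two linear sums — are both handled correctly in your write-up.
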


Employing the cyclotomic numbers of order $2$, we proceed to prove the following preparatory lemma.

\begin{lemma}\label{lem-cubicspecial}
Let {\rm$\Tr$} be the absolute trace defined on $\Fq$. For the equations $x^3-x-c=0$ or $x^3-\al x-c=0$, assume that $x_0 \in \Fq$ is a solution.
\begin{itemize}
\item[(1)] When $p=2$, we have
$$
|\{x \in \Fq \mid x^3-x-c=0\}|=\begin{cases}
                                   1 & \mbox{if {\rm $\Tr(\frac{1}{x_0})=\Tr(\frac{1}{c})=\Tr(1)+1$,}} \\
                                   2 & \mbox{if $x_0 \in \{0,1\}$, or equivalently, $c=0$,} \\
                                   3 & \mbox{if {\rm $\Tr(\frac{1}{x_0})=\Tr(\frac{1}{c})=\Tr(1)$, $x_0 \ne 1$.}}
                               \end{cases}
$$
Consequently,
$$
\begin{cases}
  M_0(x^3,1)=\frac{q+1}{3}, \quad M_1(x^3,1)=\frac{q}{2}-1, \quad M_2(x^3,1)=1, \quad M_3(x^3,1)=\frac{q-2}{6}, &\mbox{if $m$ odd,}\\
  M_0(x^3,1)=\frac{q-1}{3}, \quad M_1(x^3,1)=\frac{q}{2}, \quad M_2(x^3,1)=1, \quad M_3(x^3,1)=\frac{q-4}{6}, &\mbox{if $m$ even.}
\end{cases}
$$
\item[(2)] When $p=3$, we have
$$
|\{x \in \Fq \mid x^3-x^2-c=0\}|=\begin{cases}
                                   1 & \mbox{if $2x_0+1 \in C_{1}^{(2,q)}$,} \\
                                   2 & \mbox{if $x_0 \in \{0, 1\}$, or equivalently, $c=0$,} \\
                                   3 & \mbox{if $2x_0+1 \in C_{0}^{(2,q)}$ and $x_0 \ne 0$.}
                               \end{cases}
$$
Consequently, we have
$$
M_0(x^3-x^2,0)=\frac{q}{3}, \quad M_1(x^3-x^2,0)=\frac{q-1}{2}, \quad M_2(x^3-x^2,0)=1, \quad M_3(x^3-x^2,0)=\frac{q-3}{6}.
$$
\item[(3)] When $p>3$, for $\be \in \{ 1,\al \}$ and
$$
j=\begin{cases}
     1 & \mbox{if $\be=1$,}\\
    -1 & \mbox{if $\be=\al$,}
\end{cases}
$$
we have
$$
|\{x \in \Fq \mid x^3-\be x-c=0\}|=\begin{cases}
                                   1 & \mbox{if $1-\frac{3x_0^2}{4\be} \in \be C_{1}^{(2,q)}$,} \\
                                   2 & \mbox{if $3 \in \be C_0^{(2,q)}$ and $x_0^2 \in \{\frac{\be}{3}, \frac{4\be}{3}\}$,} \\
                                   3 & \mbox{if $1-\frac{3x_0^2}{4\be} \in \be C_{0}^{(2,q)}$ and $x_0^2 \ne \frac{\be}{3}$ whenever $3 \in \be C_0^{(2,q)}$.}
                               \end{cases}
$$
Consequently, we have
$$
\begin{cases}
  M_0(x^3,\be)=\frac{q-1}{3}, \; M_1(x^3,\be)=\frac{q-j}{2}, \; M_2(x^3,\be)=1+j, \; & M_3(x^3,\be)=\frac{q-4-3j}{6}, \\
                                                                                     & \mbox{if $p \equiv 1 \Mod{12}$ or $m$ even,}\\
  M_0(x^3,\be)=\frac{q-1}{3}, \; M_1(x^3,\be)=\frac{q+j}{2}, \; M_2(x^3,\be)=1-j, \; & M_3(x^3,\be)=\frac{q-4+3j}{6}, \\
                                                                                     & \mbox{if $p \equiv 7 \Mod{12}$ and $m$ odd,}\\
  M_0(x^3,\be)=\frac{q+1}{3}, \; M_1(x^3,\be)=\frac{q-2+j}{2}, \; M_2(x^3,\be)=1-j,\;& M_3(x^3,\be)=\frac{q-2+3j}{6}, \\
                                                                                     & \mbox{if $p \equiv 5 \Mod{12}$ and $m$ odd,}\\
  M_0(x^3,\be)=\frac{q+1}{3}, \; M_1(x^3,\be)=\frac{q-2-j}{2}, \; M_2(x^3,\be)=1+j,\;& M_3(x^3,\be)=\frac{q-2-3j}{6}, \\
                                                                                     & \mbox{if $p \equiv 11 \Mod{12}$ and $m$ odd.}\\
\end{cases}
$$
\end{itemize}
\end{lemma}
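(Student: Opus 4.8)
The unifying idea is that a cubic with a known root $x_0 \in \Fq$ factors as a linear times a quadratic factor, so the total number of $\Fq$-roots equals $1$ plus the number of $\Fq$-roots of an explicit quadratic, up to coincidences. Concretely, when $p>3$ one writes
$$
x^3-\be x-c=(x-x_0)\bigl(x^2+x_0x+(x_0^2-\be)\bigr),
$$
when $p=3$ one has $x^3-x^2-c=(x-x_0)\bigl(x^2+(x_0-1)x+(x_0^2-x_0)\bigr)$, and when $p=2$ one has $x^3-x-c=(x+x_0)\bigl(x^2+x_0x+(x_0^2+1)\bigr)$. In each case the residual quadratic has $0$, $1$, or $2$ roots in $\Fq$ according to its discriminant, yielding respectively $1$, $2$, or $3$ distinct roots of the cubic, \emph{provided} no quadratic root coincides with $x_0$. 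The plan is to make each discriminant explicit, translate it into the stated square/non-square (or trace) condition, and separately dispose of the coincidence cases.

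First I would carry out the discriminant computation in each characteristic. For $p>3$ the discriminant of the quadratic is $x_0^2-4(x_0^2-\be)=4\be-3x_0^2=4\be\bigl(1-\tfrac{3x_0^2}{4\be}\bigr)$; since $4\be$ lies in the square class of $\be$, the quadratic is solvable if and only if $1-\tfrac{3x_0^2}{4\be}\in\be C_0^{(2,q)}$ and has no root if and only if $1-\tfrac{3x_0^2}{4\be}\in\be C_1^{(2,q)}$, which is the stated dichotomy. For $p=3$ the discriminant collapses to $2x_0+1$ (the $-3x_0^2$ term vanishes), giving the $C_0^{(2,q)}$ versus $C_1^{(2,q)}$ split directly. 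For $p=2$ the quadratic $x^2+x_0x+(x_0^2+1)$ is solvable if and only if $\Tr(1+x_0^{-2})=0$, i.e.\ $\Tr(1/x_0)=\Tr(1)$ (using $\Tr(z^2)=\Tr(z)$); the identity $c=x_0(x_0+1)^2$ together with the partial-fraction expansion $1/c=1/x_0+1/(x_0+1)+1/(x_0+1)^2$ then forces $\Tr(1/c)=\Tr(1/x_0)$, which is the remaining assertion. The coincidence cases are exactly those where $x_0$ is a double root, i.e.\ a common root of the cubic and its derivative: $x_0^2=\be/3$ when $p>3$ (respectively $x_0\in\{0,1\}$ when $p\in\{2,3\}$). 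A short check shows the partner root then satisfies $x_0^2=4\be/3$ (respectively gives $c=0$); these produce the exceptional clauses in the statement.

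For the ``Consequently'' tallies I would count the solutions $x_0$ by class and convert to the $M_i$. Since a value $c$ with exactly three roots is produced by each of its three roots, $3M_3$ equals the number of $x_0$ in the three-root class, while a $c$ with exactly two roots is witnessed by its double root, so $M_2$ equals the number of admissible double roots. The values $M_1$ and $M_0$ then follow from the two elementary identities $\sum_i M_i(f,b)=q$ and $\sum_i i\,M_i(f,b)=q$ (each $x\in\Fq$ being a root for a unique $c$). In characteristics $2$ and $3$ the class counts are immediate: the number of $x_0\in\Fq^*$ with $\Tr(1/x_0)=\Tr(1)$ is $q/2$ or $q/2-1$ according to the parity of $m$, while in characteristic $3$ the map $x_0\mapsto 2x_0+1$ is a bijection of $\Fq$, so precisely $(q-1)/2$ values of $x_0$ land in each of $C_0^{(2,q)},C_1^{(2,q)}$; subtracting the coincidence values $x_0\in\{0,1\}$ then delivers $M_3$ and hence $M_2,M_1,M_0$.

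The main obstacle is the class count when $p>3$. Here one must evaluate $N_0:=|\{x_0\in\Fq : 4\be-3x_0^2\in C_0^{(2,q)}\}|$, which I would obtain from the standard quadratic character sum $\sum_{x_0\in\Fq}\chi(4\be-3x_0^2)=-\chi(-3)$ (valid since the inner discriminant $48\be\neq0$), giving $N_0=\tfrac12\bigl(q-N_z-\chi(-3)\bigr)$ with $N_z:=|\{x_0:3x_0^2=4\be\}|\in\{0,2\}$; equivalently this can be packaged through the order-$2$ cyclotomic numbers of Lemma~\ref{lem-cyclotomy}. The delicate bookkeeping is twofold: one must subtract from $N_0$ the (zero or two) values $x_0$ with $x_0^2=\be/3$, which yield a double rather than a triple root, so that $3M_3=N_0-N_z$; and one must evaluate $\chi(3)$ and $\chi(-3)$ in $\Fq$. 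Using $\chi_{\Fq}(a)=\chi_{\Fp}(a)^m$ for $a\in\Fp^*$, together with $\chi_{\Fp}(-3)=1$ exactly when $p\equiv1\Mod{3}$ and $\chi_{\Fp}(3)=1$ exactly when $p\equiv\pm1\Mod{12}$, the residue classes $p\equiv1,5,7,11\Mod{12}$ combined with the parity of $m$ partition into the four tabulated cases, with the parameter $j=\chi(\be)$ recording whether $\be$ is a square. Assembling $M_3=\tfrac13(N_0-N_z)$, $M_2=N_z=1+j\chi(3)$, and the two linear relations then yields the listed values.
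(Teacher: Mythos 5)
Your proof is correct, and its skeleton is the same as the paper's: factor the cubic as $(x-x_0)$ times a residual quadratic, decide the solvability of that quadratic by its discriminant (a trace condition when $p=2$), dispose of the double-root coincidences separately, and then count. The genuine difference is the counting engine for $p>3$. The paper runs the count through the order-$2$ cyclotomic numbers of Lemma~\ref{lem-cyclotomy}: it sorts the values $x_0^2$ into square classes, reads off counts such as $(1,0)_q$ and $(0,0)_q$, and needs a fourfold case split ($q \Mod{4}$ crossed with whether $3$ is a square) together with small ad hoc corrections (for instance the ``$(0,1)_q-1$'' adjustment when $x_0^2=\al/3$); it computes $M_1$ and $M_3$ directly and then deduces $M_0$. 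You instead evaluate the complete quadratic character sum $\sum_{x_0\in\Fq}\chi(4\be-3x_0^2)=-\chi(-3)$, obtain $N_0=\frac{1}{2}\bigl(q-N_z-\chi(-3)\bigr)$ in one stroke, extract $3M_3=N_0-N_z$ and $M_2=N_z=1+j\chi(3)$, and recover $M_1,M_0$ from the two linear identities $\sum_i M_i=q$ and $\sum_i iM_i=q$; the case distinction enters only at the very end, when $\chi(3)$ and $\chi(-3)$ are translated into congruences on $p \Mod{12}$ and the parity of $m$. The two routes are mathematically equivalent (order-$2$ cyclotomic numbers are themselves evaluated by such sums), but yours is more uniform and less bookkeeping-prone, while the paper's stays at the level of a classical lookup table; I checked your assembled values against all four cases of the statement and they agree. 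Two cosmetic points: in characteristic $2$ the only coincidence to delete from $\{x_0\in\Fq^* \mid \Tr(1/x_0)=\Tr(1)\}$ is $x_0=1$ (the element $0$ is excluded automatically), and your identification of double roots via the derivative replaces the paper's coefficient comparison in $(x-x_1)(x-x_2)^2$ --- both yield $x_0^2\in\{\be/3,\,4\be/3\}$ with the partner-root relation $x_1=-2x_2$.
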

\begin{proof}
We first prove (1). Suppose $x^3-x-c=0$ has exactly two solutions in $\Fq$. Then $x^3-x-c=(x-x_1)(x-x_2)^2$ for some distinct $x_1, x_2 \in \Fq$. Comparing the coefficients, we have $x_1=0$, $x_2=1$, and $c=0$. Therefore, $M_2(x^3,1)=1$ as $x^3-x-c=0$ has exactly two solutions in $\Fq$ if and only if $c=0$ and the two solutions are $0$ and $1$. Next, we proceed to consider when $x^3-x-c=0$ has exactly one or three solutions. Suppose $x^3-x-c=0$ has one solution $x_0 \in \Fq$, then we can factor $x^3-x-c=(x-x_0)(x^2+x_0x+x_0^2-1)$, where $c=x_0^3+x_0$. We need to check if $x^2+x_0x+x_0^2-1=0$ has solutions in $\Fq$, where $x_0 \not\in \{0,1\}$. Note that $x^2+x_0x+x_0^2-1=0$ is equivalent to $1+\frac{1}{x_0^2}=(\frac{x}{x_0})^2+\frac{x}{x_0}$. Hence, $x^2+x_0x+x_0^2-1=0$ has no solution if and only if $\Tr(\frac{1}{x_0})=\Tr(1)+1$, and two solutions if and only if $\Tr(\frac{1}{x_0})=\Tr(1)$. Note that $\Tr(\frac{1}{c})=\Tr(\frac{1}{x_0+1}(\frac{1}{x_0}+\frac{1}{x_0+1}))=\Tr(\frac{1}{x_0}+\frac{1}{x_0+1}+\frac{1}{(x_0+1)^2})=\Tr(\frac{1}{x_0})$.
If $m$ is odd, then there are $\frac{q}{2}-1$ choices of $x_0 \in \Fq \sm\{0,1\}$, such that $\Tr(\frac{1}{x_0})=\Tr(1)+1=0$. Thus, $M_1(x^3,1)=\frac{q}{2}-1$. Moreover, there are $\frac{q}{2}-1$ choices of $x_0 \in \Fq \sm\{0,1\}$, such that $\Tr(\frac{1}{x_0})=\Tr(1)=1$. Thus, $M_3(x^3,1)=\frac{1}{3}(\frac{q}{2}-1)=\frac{q-2}{6}$ and therefore, the value of $M_0(x^3,1)$ follows immediately. Similar arguments lead to the values of $M_i(x^3,1)$ when $m$ is even.

The proofs of (2) and (3) are very similar to each other. Below, we only prove (3) with $\be=\al$. Suppose $x^3-\al x-c=0$ has exactly two solutions in $\Fq$. Then $x^3-\al x-c=(x-x_1)(x-x_2)^2$ for some distinct $x_1, x_2 \in \Fq$. Comparing the coefficients, we have $x_1^2=\frac{4\al}{3}$, $x_2^2=\frac{\al}{3}$, and $x_1+2x_2=0$. Hence, $x^3-\al x-c=0$ has two solutions in $\Fq$ if and only if $3 \in \al C_{0}^{(2,q)}$ and $c=\pm \frac{2\al}{3}\sqrt{\frac{\al}{3}}$. In this case, we have $M_2(x^3,\al)=2$, where $\{2\sqrt{\frac{\al}{3}}, -\sqrt{\frac{\al}{3}}\}$ and $\{-2\sqrt{\frac{\al}{3}}, \sqrt{\frac{\al}{3}}\}$ are two sets of solutions. Next, we proceed to consider when $x^3-\al x-c=0$ has exactly one or three solutions. Since $x^3-\al x-c=0$ has one solution $x_0 \in \Fq$, we can factor $x^3-\al x-c=(x-x_0)(x^2+x_0x+x_0^2-\al)$, where $c=x_0^3-\al x_0$. We need to check if $x^2+x_0x+x_0^2-\al=0$ has solutions in $\Fq$, where $x_0^2 \not\in \{\frac{\al}{3},\frac{4\al}{3}\}$. Since $x^2+x_0x+x_0^2-\al=0$ is equivalent to $(x+\frac{x_0}{2})^2=\al-\frac{3x_0^2}{4}$, it has zero or two solutions if and only if $1-\frac{3x_0^2}{4\al} \in C_0^{(2,q)}$ or $1-\frac{3x_0^2}{4\al} \in C_1^{(2,q)}$. We first consider the case of $q \equiv 1 \Mod{4}$. If $3 \in C_0^{(2,q)}$, then the number of nonzero square $x_0^2 \in C_0^{(2,q)}$, such that $1-\frac{3x_0^2}{4\al} \in C_0^{(2,q)}$, is equal to $(1,0)_q=\frac{q-1}{4}$. Note that when $x_0=0$, we have $1-\frac{3x_0^2}{4\al}=1 \in C_0^{(2,q)}$. Thus, $M_1(x^3,\al)=2\cdot\frac{q-1}{4}+1=\frac{q+1}{2}$. Similarly, the number of nonzero square $x_0^2 \in C_0^{(2,q)}$, such that $1-\frac{3x_0^2}{4\al} \in C_1^{(2,q)}$, is equal to $(1,1)_q=\frac{q-1}{4}$. Hence, $M_3(x^3,\al)=2\cdot\frac{q-1}{4}\cdot\frac{1}{3}=\frac{q-1}{6}$. If $3 \in C_1^{(2,q)}$, then the number of nonzero square $x_0^2 \in C_0^{(2,q)} \sm \{ \frac{\al}{3}, \frac{4\al}{3}\}$, such that $1-\frac{3x_0^2}{4\al} \in C_0^{(2,q)}$, is equal to $(0,0)_q=\frac{q-5}{4}$. Note that when $x_0=0$, we have $1-\frac{3x_0^2}{4\al}=1 \in C_0^{(2,q)}$. Thus, $M_1(x^3,\al)=2\cdot\frac{q-5}{4}+1=\frac{q-3}{2}$. Similarly, the number of nonzero square $x_0^2 \in C_0^{(2,q)} \sm \{ \frac{\al}{3}, \frac{4\al}{3}\}$, such that $1-\frac{3x_0^2}{4\al} \in C_1^{(2,q)}$, is equal to $(0,1)_q-1=\frac{q-5}{4}$. Note that a minus one appears in the previous equation, as $1-\frac{3x_0^2}{4\al} \in C_1^{(2,q)}$ when $x_0^2=\frac{\al}{3}$. Therefore, $M_3(x^3,\al)=2\cdot\frac{q-5}{4}\cdot\frac{1}{3}=\frac{q-5}{6}$. Note that $3 \in C_{0}^{(2,q)}$ if and only if $3 \in C_0^{(2,p)}$ or $m$ is even. Consequently, we have
$$
3 \in \begin{cases}
  C_0^{(2,q)} & \mbox{if $p \equiv 1,11 \Mod{12}$ or $m$ even,} \\
  C_1^{(2,q)} & \mbox{if $p \equiv 5,7 \Mod{12}$ and $m$ odd.}
\end{cases}
$$
Hence, $q \equiv 1 \Mod{4}$ and $3 \in C_0^{(2,q)}$ is equivalent to $p \equiv 1 \Mod{12}$ or $m$ even. Similarly, $q \equiv 1 \Mod{4}$ and $3 \in C_1^{(2,q)}$ is equivalent to $p \equiv 5 \Mod{12}$ and $m$ odd. Therefore, two out of the four cases in (3) with $\be=\al$ have been completed by the above argument. Applying an analogous approach to the $q \equiv 3 \Mod{4}$ case, we complete the proof of (3) with $\be=\al$.
\end{proof}

\begin{remark}
We note that for $\Fq=\F_{2^m}$, the value of $M_i(x^3,1)$ has been computed in {\rm\cite[Appendix]{KHCH}}. Moreover, as a special case, the multiplicity distribution of $x^{3}$ follows from the result of Bluher {\rm\cite[Theorem 5.6]{Blu}}, see also {\rm\cite[Proposition B.9]{LP}}.
\end{remark}

Combining Lemmas~\ref{lem-cubicreduction}, \ref{lem-cubiczero} and \ref{lem-cubicspecial}, we can completely determine the multiplicity distribution of degree three polynomials.

\begin{theorem}\label{thm-cubicmuldis}
The multiplicity distribution of $f(x)=x^3-ax^2$ over $\Fq=\F_{p^m}$ is as follows.
\begin{itemize}
\item[(1)] When $p=2$, if $m$ odd, then we have
$$
\begin{cases}
M_0(f,b)=0, M_1(f,b)=q, M_2(f,b)=0, M_3(f,b)=0, & \mbox{if $a=b=0$, or $a \ne 0$, $\frac{b}{a^2}=1$,} \\
M_0(f,b)=\frac{q+1}{3}, M_1(f,b)=\frac{q}{2}-1, M_2(f,b)=1, M_3(f,b)=\frac{q-2}{6}, & \mbox{if $a=0$, $b \ne 0$, or $a \ne 0$, $\frac{b}{a^2} \ne 1$,}
\end{cases}
$$
and if $m$ even, then we have
$$
\begin{cases}
M_0(f,b)=\frac{2(q-1)}{3}, M_1(f,b)=1, M_2(f,b)=0, M_3(f,b)=\frac{q-1}{3}, & \mbox{if $a=b=0$, or $a \ne 0$, $\frac{b}{a^2}=1$,} \\
M_0(f,b)=\frac{q-1}{3}, M_1(f,b)=\frac{q}{2}, M_2(f,b)=1, M_3(f,b)=\frac{q-4}{6}, & \mbox{if $a=0$, $b \ne 0$, or $a \ne 0$, $\frac{b}{a^2} \ne 1$.}
\end{cases}
$$
\item[(2)] When $p=3$, we have
$$
\begin{cases}
M_0(x^3,0)=0, \quad M_1(x^3,0)=q, \quad M_2(x^3,0)=0, \quad M_3(x^3,0)=0, & \mbox{if $b \in \{0\} \cup C_1^{(2,q)}$,} \\
M_0(x^3,b)=\frac{2q}{3}, \quad M_1(x^3,b)=0, \quad M_2(x^3,b)=0, \quad M_3(x^3,b)=\frac{q}{3}, & \mbox{if $b \in C_0^{(2,q)}$.}
\end{cases}
$$
Moreover,
$$
M_0(f,b)=\frac{q}{3}, \quad M_1(f,b)=\frac{q-1}{2}, \quad M_2(f,b)=1, \quad M_3(f,b)=\frac{q-3}{6}.
$$
where $a \ne 0$ and $b \in \Fq$.
\item[(3)] When $p>3$, if $p \equiv 1 \Mod{12}$ or $m$ even, or $p \equiv 7 \Mod{12}$ and $m$ odd, set
$$
j=\begin{cases}
     1 & \mbox{if $p \equiv 1 \Mod{12}$ or $m$ even,}\\
    -1 & \mbox{if $p \equiv 7 \Mod{12}$ and $m$ odd.}
\end{cases}
$$
Then we have
$$
\begin{cases}
M_0(f,b)=\frac{2(q-1)}{3}, M_1(f,b)=1, M_2(f,b)=0, M_3(f,b)=\frac{q-1}{3}, \\
\hspace{8cm} \mbox{if $a=b=0$, or $a \ne 0$, $\frac{b}{a^2}=-\frac{1}{3}$,} \\
M_0(f,b)=\frac{q-1}{3}, M_1(f,b)=\frac{q-j}{2}, M_2(f,b)=1+j, M_3(f,b)=\frac{q-4-3j}{6}, \\
\hspace{8cm} \mbox{if $a=0$, $b \in C_0^{(2,q)}$, or $a \ne 0$, $\frac{b}{a^2}+\frac{1}{3} \in C_0^{(2,q)}$,} \\
M_0(f,b)=\frac{q-1}{3}, M_1(f,b)=\frac{q+j}{2}, M_2(f,b)=1-j, M_3(f,b)=\frac{q-4+3j}{6}, \\
\hspace{8cm} \mbox{if $a=0$, $b \in C_1^{(2,q)}$, or $a \ne 0$, $\frac{b}{a^2}+\frac{1}{3} \in C_1^{(2,q)}$,}
\end{cases}
$$

If $p \equiv 5 \Mod{12}$ and $m$ odd, or $p \equiv 11 \Mod{12}$ and $m$ odd, set
$$
k=\begin{cases}
     1 & \mbox{if $p \equiv 5 \Mod{12}$ and $m$ odd,}\\
    -1 & \mbox{if $p \equiv 11 \Mod{12}$ and $m$ odd.}
\end{cases}
$$
Then we have
$$
\begin{cases}
M_0(f,b)=0, M_1(f,b)=q, M_2(f,b)=0, M_3(f,b)=0, \\
\hspace{8cm} \mbox{if $a=b=0$, or $a \ne 0$, $\frac{b}{a^2}=-\frac{1}{3}$,} \\
M_0(f,b)=\frac{q+1}{3}, M_1(f,b)=\frac{q-2+k}{2}, M_2(f,b)=1-k, M_3(f,b)=\frac{q-2+3k}{6}, \\
\hspace{8cm} \mbox{if $a=0$, $b \in C_0^{(2,q)}$, or $a \ne 0$, $\frac{b}{a^2}+\frac{1}{3} \in C_0^{(2,q)}$,} \\
M_0(f,b)=\frac{q+1}{3}, M_1(f,b)=\frac{q-2-k}{2}, M_2(f,b)=1+k, M_3(f,b)=\frac{q-2-3k}{6}, \\
\hspace{8cm} \mbox{if $a=0$, $b \in C_1^{(2,q)}$, or $a \ne 0$, $\frac{b}{a^2}+\frac{1}{3} \in C_1^{(2,q)}$,}
\end{cases}
$$
\end{itemize}
\end{theorem}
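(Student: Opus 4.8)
The plan is to assemble the claimed distribution directly from the three preparatory lemmas, treating each characteristic separately and letting the reduction Lemma~\ref{lem-cubicreduction} decide which base polynomial governs each pair $(a,b)$; no new computation is needed beyond careful bookkeeping. First I would dispose of characteristic two. By Lemma~\ref{lem-cubicreduction}(1), for every $(a,b)$ the value $M_i(x^3-ax^2,b)$ equals either $M_i(x^3,0)$ (precisely when $a=b=0$, or $a\ne 0$ and $\frac{b}{a^2}=1$) or $M_i(x^3,1)$ (in all remaining cases). Substituting the values of $M_i(x^3,0)$ from Lemma~\ref{lem-cubiczero}(1) and of $M_i(x^3,1)$ from Lemma~\ref{lem-cubicspecial}(1), split according to the parity of $m$, reproduces verbatim the two case distinctions of part (1).

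Characteristic three is handled in the same spirit. For $a=0$, Lemma~\ref{lem-cubicreduction}(2) routes $b=0$ to $M_i(x^3,0)$, $b\in C_1^{(2,q)}$ to $M_i(x^3,\al)$, and $b\in C_0^{(2,q)}$ to $M_i(x^3,1)$. Since Lemma~\ref{lem-cubiczero}(2) records that $M_i(x^3,\al)$ coincides with $M_i(x^3,0)$ (both equal to $q$ at $i=1$ and zero elsewhere), the rows for $b\in\{0\}\cup C_1^{(2,q)}$ merge into the single stated row, while the row for $b\in C_0^{(2,q)}$ is read off from $M_i(x^3,1)$. For $a\ne 0$, Lemma~\ref{lem-cubicreduction}(2) collapses every $b$ to $M_i(x^3-x^2,0)$, whose value is supplied by Lemma~\ref{lem-cubicspecial}(2), yielding the final line of part (2).

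The genuinely bookkeeping-heavy part is $p>3$, and this is where I expect the main effort to lie. Here Lemma~\ref{lem-cubicreduction}(3) sends each $(a,b)$ to $M_i(x^3,0)$ (when $a=b=0$ or $\frac{b}{a^2}=-\frac13$), to $M_i(x^3,1)$ (when $\frac{b}{a^2}+\frac13\in C_0^{(2,q)}$), or to $M_i(x^3,\al)$ (when it lies in $C_1^{(2,q)}$), so the three rows of each block correspond exactly to the base polynomials $x^3$, and $x^3-\be x$ with $\be=1$ and $\be=\al$. The subtlety is that Lemma~\ref{lem-cubicspecial}(3) presents $M_i(x^3,\be)$ in four cases indexed by $p\Mod{12}$ and the parity of $m$, whereas the theorem groups them into two blocks. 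I would first verify that the case splits match: since an odd prime $p>3$ satisfies $p\Mod{12}\in\{1,5,7,11\}$ with $p\equiv 1,7\Mod 3$ for $p\equiv 1,7\Mod{12}$ and $p\equiv 2\Mod 3$ for $p\equiv 5,11\Mod{12}$, and since $2^{\text{even}}\equiv 1$, $2^{\text{odd}}\equiv 2\Mod 3$, the condition ``$p\equiv 1\Mod{12}$ or $m$ even, or $p\equiv 7\Mod{12}$ and $m$ odd'' is exactly $q\equiv 1\Mod 3$, while the other block is exactly $q\equiv 2\Mod 3$. This aligns the first row of each block with the matching value of $M_i(x^3,0)$ from Lemma~\ref{lem-cubiczero}(3).

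Finally I would verify the reparametrization of the sign, which is the only real trap. Writing $s=+1$ for the row $\be=1$ and $s=-1$ for $\be=\al$, Lemma~\ref{lem-cubicspecial}(3) is parametrized by $s$ itself, and one checks directly that its cases A and B produce only two distinct distributions, with case~B at sign $s$ equal to case~A at sign $-s$ (and likewise for C, D). The theorem instead lets $j$ (resp.\ $k$) be the \emph{case} indicator, $+1$ for the first of the two merged cases and $-1$ for the second, so that the square row ($s=+1$) is governed by effective parameter $j$ and the nonsquare row ($s=-1$) by $-j$; substituting confirms that the formulas $M_1=\frac{q-j}{2},\,M_2=1+j,\,M_3=\frac{q-4-3j}{6}$ and their companions recover all four lemma entries. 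A closing sanity check against the three identities of Proposition~\ref{prop-basiceqn} guarantees no row has been misassigned. The main obstacle throughout is not any isolated calculation but keeping the reduction map, the base-case tables, and the two competing sign conventions ($s$ versus the case indicator) perfectly aligned across all the subcases.
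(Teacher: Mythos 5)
Your proposal is correct and follows essentially the same route as the paper, whose entire proof of Theorem~\ref{thm-cubicmuldis} is the single remark that it follows by combining Lemmas~\ref{lem-cubicreduction}, \ref{lem-cubiczero} and \ref{lem-cubicspecial}. The only content beyond that remark is the bookkeeping you describe---the identification of the two $p>3$ blocks with $q\equiv 1,2 \Mod{3}$ and the translation between the lemma's $\be$-indicator sign and the theorem's case-indicator $j$ (resp.\ $k$)---and your account of that translation checks out.
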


According to Remark~\ref{rem-normalization}(1), Theorem~\ref{thm-cubicintdis} immediately follows from Theorem~\ref{thm-cubicmuldis}.

\begin{remark}
Let $f \in \Fq[x]$ be a polynomial of degree $2 \le d \le q-1$. Define
$$
N_f=\{ c \in \Fq \mid \mbox{$f(x)+cx$ is a permutation over $\Fq$} \}.
$$
Lower and upper bounds on the non-hitting index $v_0(f)$ involving $q$, $d$ and $|N_f|$ were derived in {\rm\cite[Proposition 3.4]{LP}}. More precisely, we have
\begin{equation}\label{eqn-lowerupper}
\lc \frac{q-1}{d}\rc (q-|N_f|) \le v_0(f) \le (q-\lc \frac{q}{d} \rc)(q-|N_f|).
\end{equation}
Since the size of $N_f$ is in general difficult to compute, the tightness of the bounds in \eqref{eqn-lowerupper} remains unclear. On the other hand, Theorem~\ref{thm-cubicintdis} provides some instances where the bounds are actually tight, which can be achieved by polynomials of the form $x^3-ax^2$. In fact, the lower bound in \eqref{eqn-lowerupper} is tight, when $p=2$, $m$ odd, or $p \equiv 5,11 \Mod{12}$, $m$ odd, or $p=3$, $a \ne 0$. The upper bound in \eqref{eqn-lowerupper} is tight, when $p=3$ and $a=0$.
\end{remark}

Let $f$ be a polynomial over $\Fq$. For $b \in \Fq$ and $0 \le i \le q$, define
$$
M_i^*(f,b)=|\{ c \in \Fq \mid \mbox{$f(x)-bx-c=0$ has $i$ nonzero solutions in $\Fq$} \}|.
$$
Employing Theorem~\ref{thm-cubicmuldis}, we can derive the intersection distribution of some other monomials closely related to degree three polynomials.

\begin{theorem}\label{thm-intdis}
Let $f(x)=x^d$ be a polynomial over $\Fq$. Then the following holds.
\begin{itemize}
\item[(1)] If $p=2$, $m$ odd and $d \in \{ \frac{q+1}{3}, q-3 \}$, then
$$
v_0(f)=\frac{q^2-1}{3}, \quad v_1(f)=\frac{q^2-q+2}{2}, \quad v_2(f)=q-1, \quad v_3(f)=\frac{(q-1)(q-2)}{6}.
$$
\item[(2)] If $p=2$, $m$ even and $d=q-3$, then
$$
v_0(f)=\frac{(q-1)^2}{3}, \quad v_1(f)=\frac{3q^2+7q-4}{6}, \quad v_2(f)=0, \quad v_3(f)=\frac{(q-1)(q-4)}{6}, \quad v_4(f)=\frac{q-1}{3}.
$$
\item[(3)] If $p=3$ and $d \in \{\frac{2q}{3}, q-3\}$, then
$$
v_0(f)=\frac{(2q+3)(q-1)}{6}, \quad v_1(f)=\frac{q^2-2q+3}{2}, \quad v_2(f)=\frac{3(q-1)}{2}, \quad v_3(f)=\frac{(q-1)(q-3)}{6}.
$$
\item[(4)] If $p>3$, $q \equiv 1 \Mod{3}$ and $d=q-3$, then
\begin{align*}
&v_0(f)=\frac{(2q+1)(q-1)}{6}, \quad v_1(f)=\frac{3q^2-2q+5}{6}, \quad v_2(f)=\frac{3(q-1)}{2}, \\
&v_3(f)=\frac{(q-1)(q-7)}{6}, \quad v_4(f)=\frac{q-1}{3}.
\end{align*}
\item[(5)] If $p>3$, $q \equiv 2 \Mod{3}$ and $d \in \{ \frac{q+1}{3}, q-3\}$, then
$$
v_0(f)=\frac{(2q+5)(q-1)}{6}, \quad v_1(f)=\frac{q^2-4q+5}{2}, \quad v_2(f)=\frac{5(q-1)}{2}, \quad v_3(f)=\frac{(q-1)(q-5)}{6}.
$$
\end{itemize}
\end{theorem}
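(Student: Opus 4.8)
\section*{Proof proposal for Theorem~\ref{thm-intdis}}

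The plan is to reduce every monomial $x^d$ appearing in the statement to the cubic $x^3-ax^2$, whose full multiplicity distribution is given by Theorem~\ref{thm-cubicmuldis}, by applying an explicit change of variables to the equation $x^d-bx-c=0$ and then summing $M_i(x^3-ax^2,0)$ over the appropriate parameters. Two substitutions suffice. For the exponents $d=(q+1)/3$ (when $q\equiv 2\Mod 3$) and $d=2q/3$ (when $p=3$), cubing is a bijection of $\Fq$ since $\gcd(3,q-1)=1$, and in both cases the graph of $x^d$ reparametrizes as the cuspidal cubic $\{(t^3,t^2)\mid t\in\Fq\}$: for $d=(q+1)/3$ one sets $x=t^3$ and uses $t^{q+1}=t^2$, while for $d=2q/3$ one sets $x=t^3$ with $t=x^{3^{m-1}}$ and uses $t^{2q}=t^2$. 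A non-vertical line $y=bx+c$ then meets the graph in exactly the number of $t\in\Fq$ with $t^2=bt^3+c$; for $b\neq 0$ this is $t^3-at^2-e=0$ with $a=1/b$ and $e=-c/b$, and as $(b,c)$ runs over $\Fq^*\times\Fq$ the pair $(a,e)$ runs bijectively over $\Fq^*\times\Fq$. Hence the lines with $b\neq 0$ contribute exactly $\sum_{a\neq 0}M_i(x^3-ax^2,0)$ to $v_i$, while the horizontal lines $b=0$ contribute separately through $\#\{t:t^2=c\}$.

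For $d=q-3$, which occurs in every case, one uses $x^{q-3}=x^{-2}$ for $x\neq 0$, so that for such $x$ the equation $x^{q-3}=bx+c$ becomes, after multiplying by $x^2$, the relation $bx^3+cx^2-1=0$. When $b\neq 0$ this is $x^3-ax^2-e=0$ with $a=-c/b$ and $e=1/b\neq 0$, again yielding a bijection of parameter pairs; since $e\neq 0$ forces $x\neq 0$, the relevant counts are governed by the nonzero-solution distribution $M_i^*(x^3-ax^2,0)$, hence by $M_i(x^3-ax^2,0)$ after removing the single $e=0$ term. Here the substitution is invalid at $x=0$, so the basepoint must be restored by hand: the point $(0,0)$ lies on the graph and is hit by every line through the origin, so each line with $c=0$ carries one extra intersection, while the horizontal lines $b=0$ reduce to counting solutions of $x^2=1/c$.

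With these reductions in place, each $v_i(x^d)$ assembles as $\sum_{a\neq 0}M_i(x^3-ax^2,0)$ together with explicit boundary terms. The bulk sum is read off from Theorem~\ref{thm-cubicmuldis} at $b=0$; via Lemma~\ref{lem-cubicreduction} and Lemma~\ref{lem-cubicspecial} the value of $M_i(x^3-ax^2,0)$ depends only on whether $1/3$, equivalently $3$, is a square in $\Fq$, and is in particular independent of $a\neq 0$, so the sum is simply $(q-1)$ times a single vector. A key structural feature is that $\gcd(3,q-1)$ dictates the shape of the answer: when $3\mid q-1$ (cases (2) and (4)) the equation $x^3=1/b$ coming from the lines through the origin has either $0$ or $3$ nonzero solutions, and together with the basepoint $x=0$ this produces the extra $v_4$ term that is absent in the other cases. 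This is exactly why $(q+1)/3$ and $q-3$ share a common formula within cases (1) and (5): both route through the same value distribution of $x^3-ax^2$, differing only in their degenerate-line bookkeeping.

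I expect the main obstacle to be organizational rather than conceptual. First, one must correctly match the square/nonsquare status of $3$ in $\Fq$ to the $p\bmod 12$ and $m$-parity sub-cases of Theorem~\ref{thm-cubicmuldis} and Lemma~\ref{lem-cubicspecial}, verifying for instance that within case (5) the choices $p\equiv 5$ and $p\equiv 11\Mod{12}$ give different squareness of $3$ yet the same final multiplicity vector. Second, one must carefully track the three families of degenerate lines for $d=q-3$ (the horizontal lines $b=0$; the lines through the origin $c=0$; and the restored basepoint $x=0$) so that no intersection is double-counted or dropped. Once the correct sub-case is selected and these boundary contributions are tallied, each of the five displayed formulas follows by summing and simplifying, and agreement can be confirmed against Proposition~\ref{prop-basiceqn}.
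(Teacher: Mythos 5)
Your proposal is correct --- all five cases do assemble to the stated formulas under your decomposition --- and it shares the paper's basic strategy of reducing each monomial to a degree-three polynomial and reading the counts off Theorem~\ref{thm-cubicmuldis}; however, your decomposition is genuinely different from the paper's. The paper proves only case (4) ($d=q-3$) and declares the rest ``similar'': it substitutes $u=1/x$, so that nonzero solutions of $x^{q-3}-bx-c=0$ become nonzero solutions of the depressed cubic $u^3-cu-b=0$, then groups lines by the intercept $c$ and sums the starred distributions $M_i^*(x^3,c)$ over the two quadratic classes of $c\in\Fq^*$, the $j$-dependence of the two vectors cancelling in the total. You instead keep the variable, multiply by $x^2$ to get $bx^3+cx^2-1=0$ (precisely the reciprocal polynomial of the paper's cubic), and group lines by $a=-c/b$, so the bulk term is $(q-1)$ copies of the single vector $M_i(x^3-ax^2,0)$, which by Lemma~\ref{lem-cubicreduction} is constant in $a\ne 0$ (equal to $M_i(x^3,\tfrac{1}{3})$ for $p>3$, $M_i(x^3,1)$ for $p=2$, $M_i(x^3-x^2,0)$ for $p=3$); the sub-case independence you must check (e.g.\ $p\equiv 5$ versus $p\equiv 11 \pmod{12}$ in case (5)) then concerns one vector rather than a sum of two, a small but real simplification. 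Your route also adds value exactly where the paper is silent: the cuspidal-cubic parametrization $\{(t^3,t^2)\mid t\in\Fq\}$ treats $d=(q+1)/3$ and $d=2q/3$ uniformly (cubing being a bijection in those cases) and makes transparent why these exponents share formulas with $q-3$, since the bulk contribution $\sum_{a\ne 0}M_i(x^3-ax^2,0)$ is identical and only the boundary bookkeeping differs --- the horizontal lines, and for $q-3$ additionally the lines $c=0$ with the cube-root count $x^3=1/b$ plus the basepoint $x=0$, together with the removal of one $e=0$ two-secant per $a\ne 0$. That boundary bookkeeping is where the extra term $v_4=\frac{q-1}{3}$ in cases (2) and (4) arises, and it is the only delicate part of your argument, so it should be written out explicitly in a final version.
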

\begin{proof}
We only prove (4), since the other cases are similar. For $b,c \in\Fq$, consider the number of solutions to the equation $x^{q-3}-bx-c=0$. Note that $0$ is a solution if and only if $c=0$. Clearly,
\begin{equation*}
|\{x \in \Fq \mid x^{q-3}-bx=0 \}|=\begin{cases}
                                       1 & \mbox{if $b \notin C_0^{(3,q)}$,} \\
                                       4 & \mbox{if $b \in C_0^{(3,q)}$.}
\end{cases}
\end{equation*}
If $c \ne 0$, then it is easy to see that every nonzero solution to $x^{q-3}-bx-c=0$ is also a nonzero solution to $(\frac{1}{x})^3-\frac{c}{x}-b=0$. Hence, we need to count the number of nonzero solution to $x^3-cx-b=0$, where $b \in \Fq$ and $c \ne 0$. Note that $x^3-cx-b=0$ has a zero solution if and only if $b=0$. Moreover, $x^3-cx=0$ has $3$ solutions and $2$ nonzero solutions if and only if $c \in C_0^{(2,q)}$, and has $1$ solution and no nonzero solution if and only if $c \in C_1^{(2,q)}$. Thus, it remains to compute $M_i^*(x^3,c)$, for each $c \in \Fq^*$. Employing Theorem~\ref{thm-cubicmuldis}(3), we have
\begin{equation*}
\begin{cases}
M_0^*(x^3,c)=\frac{q-1}{3}, M_1^*(x^3,c)=\frac{q-j}{2}, M_2^*(x^3,c)=2+j, M_3^*(x^3,c)=\frac{q-10-3j}{6}, & \mbox{if $c \in C_0^{(2,q)}$,} \\
M_0^*(x^3,c)=\frac{q+2}{3}, M_1^*(x^3,c)=\frac{q+j-2}{2}, M_2^*(x^3,c)=1-j, M_3^*(x^3,c)=\frac{q-4+3j}{6}, & \mbox{if $c \in C_1^{(2,q)}$,}
\end{cases}
\end{equation*}
where
$$
j=\begin{cases}
     1 & \mbox{if $p \equiv 1 \Mod{12}$ or $m$ even,}\\
    -1 & \mbox{if $p \equiv 7 \Mod{12}$ and $m$ odd.}
\end{cases}
$$
Combining the above observations, we derive the intersection distribution.
\end{proof}

So far, not much is known about the non-hitting index of monomials. Employing Theorems~\ref{thm-cubicintdis} and \ref{thm-intdis}, in Table~\ref{tab-monononhitting}, we give an update of \cite[Table 3.2]{LP}, where an entry with superscript $\blacksquare$ represents the non-hitting index derived from Theorems~\ref{thm-cubicintdis} and \ref{thm-intdis}, an entry with superscript $\bigstar$ represents the non-hitting index which has not yet been understood, an entry without superscript represents the non-hitting index known before. Note that in the table, when $(d,q-1)=1$, we group $d$ and its inverse modulo $q-1$ together. As we shall see, when $q \le 11$, the non-hitting index of each monomial has been explained.

\begin{table}[H]
\begin{center}
\caption{The non-hitting index of all power mappings in $\Fq$, $q \le 16$}
\label{tab-monononhitting}
\begin{tabular}{|c|l|}
\hline
$q$ & $(d,v_0(x^d))$ \\ \hline
$2$  &  $(1,1)$   \\ \hline
$3$  &  $(1,2)$, $(2,3)$   \\ \hline
$4$  &  $(1,3)$, $(2,6)$, $(3,5)$   \\ \hline
$5$  &  $(1,4)$, $(2,10)$, $(3,8)$, $(4,7)$   \\ \hline
$7$  &  $(1,6)$, $(2,21)$, $(3,16)$, $(4,15)$, $(5,18)$, $(6,11)$ \\ \hline
$8$  &  $(1,7)$, $(\{2,4\},28)$, $(\{3,5\},21)$, $(6,28)$, $(7,13)$ \\ \hline
$9$  &  $(1,8)$, $(2,36)$, $(3,24)$, $(4,30)$, $(5,24)$, $(6,28)^\blacksquare$, $(7,32)$, $(8,15)$  \\ \hline
$11$  &  $(1,10)$, $(2,55)$, $(\{3,7\},40)^\blacksquare$, $(4,45)^\blacksquare$, $(5,38)$, $(6,35)$ , $(8,45)^\blacksquare$, $(9,50)$, $(10,19)$ \\ \hline
\multirow{2}{*}{$13$}  &  $(1,12)$, $(2,78)$, $(3,56)^\blacksquare$, $(4,57)^\bigstar$, $(5,60)^\bigstar$, $(6,58)$, $(7,48)$, $(8,69)^\bigstar$, $(9,56)^\bigstar$, \\
& $(10,54)^\blacksquare$, $(11,72)$, $(12,23)$ \\ \hline
\multirow{2}{*}{$16$}  &  $(1,15)$, $(\{2,8\},120)$, $(3,85)$, $(4,60)$, $(5,102)$, $(6,85)^\bigstar$, $(\{7,13\},75)^\blacksquare$, $(9,85)$, \\
& $(10,87)^\bigstar$, $(11,90)^\bigstar$, $(12,70)^\bigstar$, $(14,120)$, $(15,29)$ \\ \hline
\end{tabular}
\end{center}
\end{table}

\section{Monomials having the same intersection distribution as $x^3$}\label{sec3}

In this section, inspired by the open problem of classifying o-monomials, which is equivalent to finding all monomials over $\Fq=\F_{2^m}$ with the same intersection distribution as $x^2$, we consider monomials having the same intersection distribution as $x^3$. First of all, we display several classes of monomials satisfying this property.

\begin{theorem}\label{thm-sameintdiscubic}
\begin{itemize}
\item[(1)] When $p=2$ and $1 \le d \le q-1$, the monomial $x^d$ has intersection distribution \eqref{eqn-intdisdeg3othchar} in the following cases:
\begin{itemize}
\item[(1a)] $d=2^i+1$, $\gcd(i,m)=1$,
\item[(1b)] $d\equiv (2^i+1)^{-1} \Mod{q-1}$, $\gcd(i,m)=1$, $m$ odd,
\item[(1c)] $d\equiv -2^i \Mod{q-1}$, $\gcd(i,m)=1$, $m$ odd.
\end{itemize}
\item[(2)] When $p=3$, the monomial $x^d$ has intersection distribution \eqref{eqn-intdisdeg3char3} in the following case:
\begin{itemize}
\item[(2a)] $d=3^i$, $\gcd(i,m)=1$.
\end{itemize}
\item[(3)] When $p>3$, the monomial $x^d$ has intersection distribution \eqref{eqn-intdisdeg3othchar} in the following cases:
\begin{itemize}
\item[(3a)] $d=3$,
\item[(3b)] $d=\frac{2q-1}{3}$, $p \equiv 5 \Mod{6}$, $m$ odd, where $\frac{2q-1}{3}$ is the inverse of $3$ modulo $q-1$.
\end{itemize}
\end{itemize}
\end{theorem}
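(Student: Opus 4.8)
The plan is to settle the ``transfer'' cases by symmetry and reduce everything to the two genuine computations (1a) and (2a). Case (3a) is immediate: there $x^d=x^3$, and Theorem~\ref{thm-cubicintdis} (with $p\neq 3$) gives \eqref{eqn-intdisdeg3othchar}. For (3b) the hypotheses $p\equiv 5\Mod 6$ and $m$ odd force $q\equiv 2\Mod 3$, so $\gcd(3,q-1)=1$, the map $x^3$ permutes $\Fq$, and its compositional inverse is $x^{(2q-1)/3}$; by Remark~\ref{rem-normalization}(3) the two monomials share their intersection distribution, whence (3b) follows from Theorem~\ref{thm-cubicintdis}. Similarly, under $\gcd(i,m)=1$ and $m$ odd one has $\gcd(2^i+1,q-1)=1$ (since $2^i+1\mid 2^{2i}-1$ and $\gcd(2^{2i}-1,q-1)=2^{\gcd(2i,m)}-1=1$), so $x^{2^i+1}$ permutes $\Fq$ with inverse $x^{(2^i+1)^{-1}}$, and (1b) follows from (1a) via Remark~\ref{rem-normalization}(3). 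It thus remains to treat (1a), (2a) and (1c).

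For (2a) I would exploit additivity. Since $\gcd(k,m)=1$, the monomial $x^{3^k}$ is $\F_3$-linear, hence $L_b(x):=x^{3^k}-bx$ is an $\F_3$-linear endomorphism of $\Fq$ and $M_i(x^{3^k},b)$ is governed entirely by $\ker L_b=\{0\}\cup\{x\neq 0: x^{3^k-1}=b\}$. Because $\gcd(3^k-1,q-1)=3^{\gcd(k,m)}-1=2$, the power map $x\mapsto x^{3^k-1}$ is two-to-one onto $C_0^{(2,q)}$, so $\ker L_b$ has size $3$ when $b\in C_0^{(2,q)}$ and size $1$ when $b\in\{0\}\cup C_1^{(2,q)}$. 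Reading off the resulting multiplicity distribution reproduces Theorem~\ref{thm-cubicmuldis}(2) verbatim, and summing over $b$ through Remark~\ref{rem-normalization}(1) yields \eqref{eqn-intdisdeg3char3}.

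The main obstacle is (1a), where $x^{2^i+1}$ is genuinely nonlinear. Here I would first show $v_j(x^{2^i+1})=0$ for all $j\geq 4$, i.e.\ that $x^{2^i+1}-bx-c$ has at most three roots in $\Fq$. Given a root $x_0$ and writing $x=x_0+z$, the Frobenius expansion of $(x_0+z)^{2^i+1}$ together with $p=2$ reduces the equation to $z\big(z^{2^i}+x_0 z^{2^i-1}+(x_0^{2^i}+b)\big)=0$. When $x_0^{2^i}+b\neq 0$, the substitution $w=1/z$ turns the nonlinear factor into the affine linearized equation $(x_0^{2^i}+b)w^{2^i}+x_0 w+1=0$, whose solution set is a coset of $\ker L$ with $L(w)=(x_0^{2^i}+b)w^{2^i}+x_0 w$; since $\gcd(2^i-1,q-1)=2^{\gcd(i,m)}-1=1$, the map $w\mapsto w^{2^i-1}$ permutes $\Fq^*$, forcing $\dim_{\F_2}\ker L\leq 1$ and hence at most two admissible nonzero values of $z$. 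The degenerate case $x_0^{2^i}+b=0$ is disposed of directly. Next I would compute $v_2$: a repeated root requires $x^{2^i}+b=0$, and a short factorization analysis shows that exactly two distinct roots occur precisely when $c=0$ and $b\neq 0$, where $x^{2^i+1}+bx=x(x+b^{2^{-i}})^{2^i}$; as $b$ ranges over $\Fq^*$ this gives $v_2=q-1$. With $v_{\geq 4}=0$ and $v_2=q-1$ in hand, the three identities of Proposition~\ref{prop-basiceqn} become a linear system whose unique solution is \eqref{eqn-intdisdeg3othchar}.

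Finally, (1c) I would deduce from (1a) by the reciprocal substitution $u=1/x$. For $x,u\neq 0$ the equation $x^{-2^i}-bx-c=0$ is equivalent to $u^{2^i+1}+cu+b=0$, so the nonzero solutions of the former biject with the nonzero solutions of the Gold equation with the roles of slope and intercept interchanged; as $(b,c)$ ranges over $\Fq^2$ the multiset of these Gold root counts is exactly the distribution \eqref{eqn-intdisdeg3othchar} from (1a). Accounting for the zero solutions ($x=0$ occurs iff $c=0$, while $u=0$ occurs iff $b=0$) shows that the root count of $x^{-2^i}-bx-c$ differs from that of $u^{2^i+1}+cu+b$ only on the lines $b=0$ or $c=0$: the $q-1$ pairs with $b\neq 0,\,c=0$ move from value $1$ to value $2$, while the $q-1$ pairs with $b=0,\,c\neq 0$ move from value $2$ to value $1$. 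These two adjustments cancel in aggregate, so $x^{-2^i}$ inherits \eqref{eqn-intdisdeg3othchar}. As an alternative route, (1a) itself can be read off from Bluher's root count \cite[Theorem~5.6]{Blu} invoked earlier.
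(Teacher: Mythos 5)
Your proposal is correct, and its overall skeleton coincides with the paper's (whose proof is exactly Remark~\ref{rem-monocubic}): the paper likewise settles (3a) by Theorem~\ref{thm-cubicintdis}, transfers (1b) and (3b) by taking compositional inverses of permutations via Remark~\ref{rem-normalization}(3), handles (1c) by the reciprocal substitution, and treats (2a) as the easy linearized case. The genuine divergence is in the core case (1a): the paper simply cites Bluher's theorem on the roots of $x^{q+1}+ax+b$ (\cite[Theorem 5.6]{Blu}, restated as \cite[Proposition B.9]{LP}), which hands over the full multiplicity distribution of $x^{2^i+1}$, whereas you give a self-contained argument — bounding the number of roots of $x^{2^i+1}-bx-c$ by three through the coset structure of the linearized equation $(x_0^{2^i}+b)w^{2^i}+x_0 w+1=0$, computing $v_2=q-1$, and then letting the three identities of Proposition~\ref{prop-basiceqn} force \eqref{eqn-intdisdeg3othchar}. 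What your route buys is independence from Bluher's machinery and the observation that only the two coarse facts $v_j=0$ for $j\ge 4$ and $v_2=q-1$ are needed; what the citation buys is the finer multiplicity (not just intersection) distribution, which the paper reuses elsewhere (e.g.\ in Section~\ref{sec5}). Your treatment of (1c) is also more scrupulous than the paper's: Remark~\ref{rem-monocubic}(1) records only the correspondence of nonzero solutions, while you check that the discrepancies on the lines $c=0$ and $b=0$ (counts moving from $1$ to $2$ and from $2$ to $1$, respectively, $q-1$ times each) cancel in the aggregate multiset. One step you state loosely — that exactly two distinct roots force $c=0$ — is a compression rather than a gap, since your coset analysis already shows that a polynomial $x^{2^i+1}-bx-c$ whose roots are all nonzero has either one or three roots in $\Fq$.
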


The proof of the above theorem follows from Remark~\ref{rem-monocubic} below. As we shall see, Theorem~\ref{thm-sameintdiscubic} contains the obvious monomials having the same intersection distribution as $x^3$. Besides, there are more monomials which are conjectured to have the same intersection distribution as $x^3$.

\begin{conjecture}\label{conj-sameintdiscubic}
A numerical experiment considers all monomials over $\Fq$ in the following ranges:
\begin{itemize}
\item[$\cdot$] $p=2$ and $1 \le m \le 21$,
\item[$\cdot$] $p=3$ and $1 \le m \le 13$,
\item[$\cdot$] $p>3$ and  $q \le 10^5$.
\end{itemize}
Based on the numerical result, we propose the following two conjectures.
\begin{itemize}
\item[(1)] The following two families of monomials $x^d$ over $\Fq=\F_{3^m}$ have intersection distribution \eqref{eqn-intdisdeg3char3}:
\begin{itemize}
\item[$\cdot$] $d=3^{(m+1)/2}+2$ and $d^{-1}$, $m$ odd,
\item[$\cdot$] $d=2 \cdot 3^{m-1}+1$ and $d^{-1}$, $m$ odd.
\end{itemize}
\item[(2)] The two families in Part (1), plus those in Theorem~\ref{thm-sameintdiscubic}, are all monomials having the same intersection distribution as $x^3$.
\end{itemize}
\end{conjecture}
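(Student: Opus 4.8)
The plan is to treat the two parts of the conjecture separately, as they are of very different difficulty. For Part~(1) the organizing observation is that the target distribution \eqref{eqn-intdisdeg3char3} is the \emph{unique} solution of the three identities in Proposition~\ref{prop-basiceqn} once one imposes the two qualitative constraints $v_2(f)=0$ and $v_i(f)=0$ for all $i\ge 4$: substituting these into $\sum_i v_i=q^2$, $\sum_i i\,v_i=q^2$ and $\sum_i i(i-1)v_i=q(q-1)$ forces $v_3=q(q-1)/6$, then $v_1=q(q+1)/2$, then $v_0=q(q-1)/3$. So for each new exponent it suffices to prove two facts, namely (A) $x^d-bx-c=0$ never has more than three solutions in $\Fq$, and (B) it never has exactly two. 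The inverse exponents $d^{-1}$ require no further work: by Remark~\ref{rem-normalization}(3) a permutation and its inverse share the same intersection distribution, and the hypothesis $m$ odd ensures these $d$ are coprime to $q-1$ so that $d^{-1}$ is well defined.

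To obtain (A) and (B) I would exploit the char-$3$ additivity of cubing to replace $x^d-bx-c$ by a sparse polynomial with the same $\Fq$-root set. For $d=2\cdot 3^{m-1}+1$ the substitution $x=z^3$ (a bijection of $\Fq$), together with $z^{3^m}=z$, turns the equation into $z^5-bz^3-c=0$, so that $M_i(x^d,b)$ becomes the fibre-size distribution of the map $z\mapsto z^3(z^2-b)$. For $d=3^{(m+1)/2}+2$, writing $\sigma$ for the Frobenius power with $\sigma^2(x)=x^3$ and eliminating $\sigma(x)$ from the pair $\sigma(x)x^2=bx+c$ and its image under $\sigma$, one obtains for $b\ne 0$ a genuine cubic equation in $x$ satisfied by every nonzero solution; after handling $c=0$ directly this gives (A) at once and reduces (B) to a discriminant analysis exactly parallel to Lemma~\ref{lem-cubicspecial}. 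The degree-$5$ family is the harder of the two: here the derivative $2z^4$ shows the map is unramified away from $z=0$, and I would establish (A) and (B) by translating "four (resp.\ two) distinct roots" into the existence of prescribed $\Fq$-rational configurations, deriving for each pair of roots the symmetric-function relation $e_1^4+e_2^2=b(e_1^2-e_2)$ with $e_1=z_i+z_j$ and $e_2=z_iz_j$, and showing these relations rule out the forbidden factorization types. Proving (B) uniformly for this degree-$5$ family is the main obstacle within Part~(1).

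Part~(2) is the genuine difficulty and I expect it to be the dominant obstacle of the whole statement: it is a classification of exactly the same flavour as the long-open classification of o-monomials, so a complete proof is unlikely to be within reach of present techniques. My plan would be three-pronged. First, use the structural restrictions on admissible exponents recorded in Section~\ref{sec3} to cut the candidate set of $d$ down to a sparse, congruence-constrained list. Second, dispose of all small $q$ by the finite verification already carried out in the ranges of the conjecture. Third, attack the remaining large-$q$ regime through Weil-type estimates, since each $v_i(x^d)$ is governed by the number of $\Fq$-points on the curves $(x^d-y^d)/(x-y)=b$ and their higher analogues; the rigidity of \eqref{eqn-intdisdeg3char3}, and in particular the vanishing $v_2=0$ for \emph{every} parameter, should for $q$ large relative to $d$ force these curves to have prescribed point counts and thereby pin $d$ to the listed families. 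The hard part is twofold: making this asymptotic rigidity exact rather than approximate, and closing the window of intermediate $q$ that is covered neither by the Weil bounds nor by the finite search. It is precisely this gap that keeps the statement a conjecture rather than a theorem, and I would not expect to remove it without a genuinely new idea specific to the $p=3$, $v_2=0$ structure.
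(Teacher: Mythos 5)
You are attempting to prove a statement that the paper itself does not prove: Conjecture~\ref{conj-sameintdiscubic} is presented purely as a conjecture, supported by numerical verification in the stated ranges, with Theorems~\ref{thm-necesuffcubic} and \ref{thm-necesuffcubicchar3} serving only to cut the cost of that verification. So there is no paper proof to match, and the question is whether your plan closes the gap. It does not, by your own admission: the two decisive steps are left open. Concretely, for Part~(1) your reduction framework is sound and consistent with the paper's machinery --- the target distribution \eqref{eqn-intdisdeg3char3} is indeed forced by Proposition~\ref{prop-basiceqn} once $v_2(f)=0$ and $v_i(f)=0$ for $i\ge 4$, the inverse exponents are handled by Remark~\ref{rem-normalization}(3), and your substitution $x=z^3$ correctly turns $x^{2\cdot 3^{m-1}+1}-bx-c$ into $z^5-bz^3-c$ --- but this only recasts the problem into the form the authors already had via Theorem~\ref{thm-necesuffcubicchar3} (the $2$-to-$1$ property of $g_d$ on $\Fq\sm\{1\}$ together with $\gcd(d-1,q-1)=2$). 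Your step~(B), that $z\mapsto z^3(z^2-b)$ never has a fibre of size exactly two, is precisely the mysterious content of the conjecture, and your symmetric-function relations give no mechanism for excluding it; similarly, for $d=3^{(m+1)/2}+2$ the cubic you obtain by eliminating $\sigma(x)$ bounds the number of solutions, but its roots need not satisfy the original equation, so the promised discriminant analysis ``parallel to Lemma~\ref{lem-cubicspecial}'' requires an additional compatibility argument you have not supplied.

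For Part~(2) the proposed third prong fails structurally, not just in detail. Weil-type estimates control point counts on the curves $(x^d-y^d)/(x-y)=b$ only up to an error of order $g\sqrt{q}$ where the genus $g$ grows with $d$; since the classification must exclude \emph{every} exponent $d$ up to $q-1$, the regime $d\sim q$ is never ``$q$ large relative to $d$,'' and no choice of finite search window closes the complement. This is the same obstruction that keeps the o-monomial classification open, as the paper notes, and the rigidity $v_2=0$ does not by itself convert approximate counts into exact ones. In short: your Part~(1) skeleton is a reasonable reformulation but stops exactly where the open problem begins, and your Part~(2) strategy has an unfixable asymptotic gap; neither part is proved, which is consistent with --- but does not improve upon --- the statement's status in the paper.
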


\begin{remark}\label{rem-monocubic}
\quad
\begin{itemize}
\item[(1)] When $p=2$, Family (1a) in Theorem~\ref{thm-sameintdiscubic} contains quadratic monomials, whose intersection distribution follows from {\rm\cite[Theorem 5.6]{Blu}} {\rm (see also \cite[Proposition B.9]{LP})}. The monomials in (1a) are permutations whenever $m$ is odd. Their inverses are exactly those in Family (1b). The monomials in Family (1c) are closely related to quadratic monomials, since for each $b, c \in \Fq$, the equations $x^{-2^i}-bx-c=0$ and $bx^{2^i+1}-cx^{2^i}-1=0$ have the same nonzero solutions, and replace $x$ by $\frac{1}{y}$ in the latter one, we have $y^{2^i+1}-cy-b=0$, which goes back to the quadratic monomials case. Each monomial in Family (1c) has an inverse belonging to the same family.
\item[(2)] When $p>3$, the monomial in Family (3a) of Theorem~\ref{thm-sameintdiscubic} is a permutation if and only if $p \equiv 5 \Mod{6}$ and $m$ being odd. Hence, Family (3b) consists of the inverses of Family (3a) whenever they exist.
\item[(3)] According to Parts (1) and (2), when $p \ne 3$, all monomials having the same intersection distribution as $x^3$, are the obvious ones. In contrast, the $p=3$ case is more interesting since some less obvious monomials occur. On one hand, the Family (2a) contains linearized monomials, whose proof is easy {\rm(see for instance \cite[Table 3.1]{LP})}. Moreover, each monomial in Family (2a) has an inverse belonging to the same family. On the other hand, the two more families in Conjecture~\ref{conj-sameintdiscubic}(1) are still mysterious.
\item[(4)] It is worthy to note that the exponents in Conjecture~\ref{conj-sameintdiscubic}(1) are all three-valued decimations in regard to the cross-correlation distribution of ternary $m$-sequences {\rm(see \cite[Theorem 6(A)]{DHKM} and \cite[Theorem 4.9]{Hel})}. We note that for $1 \le i \le m-1$, the decimations $d$ and $3^id$ have the same cross-correlation distribution. On the other hand, we think the intersection distribution is a more subtle property, since for $1 \le i \le m-1$, $x^d$ and $x^{3^id}$ over $\F_{3^m}$ have different intersection distributions in general.
\end{itemize}
\end{remark}

Next, we make some progress towards Conjecture~\ref{conj-sameintdiscubic}(2), by providing some restrictions on the monomials satisfying \eqref{eqn-intdisdeg3othchar} or \eqref{eqn-intdisdeg3char3}. Recall that an affine line is an $i$-secant line to $G_f$, if it intersects $G_f$ in exactly $i$ points. Since every pair of distinct points in $G_f$ could determine a $2$-secant line, the largest value of $v_2(f)$ is $\frac{q(q-1)}{2}$. In this sense, we observe that $v_2(f)=q-1$ in \eqref{eqn-intdisdeg3othchar} and $v_2(f)=0$ in \eqref{eqn-intdisdeg3char3} are both very small, which means there are very few $2$-secant lines to $G_f$. Next, we are going to show that this unusual geometric property can be interpreted in an algebraic way, which gives strong restrictions on the monomials satisfying \eqref{eqn-intdisdeg3othchar} or \eqref{eqn-intdisdeg3char3}.

Considering monomials with intersection distribution \eqref{eqn-intdisdeg3othchar}, we need to understand under what conditions, there are exactly $q-1$ distinct $2$-secant lines to $G_f$. As a preparation, we have the following two lemmas. We write $g_d(x)=\frac{x^d-1}{x-1}$ and use $H_{q,d}=\{ g_d(x) \mid x \in \Fq \sm \{1\}\}$ to denote the image set of $g_d(x)$ over $\Fq \sm \{1\}$. The following lemma is easy to see.

\begin{lemma}\label{lem-secpreima}
Let $f$ be a polynomial over $\Fq$. For distinct $x_1, x_2 \in \Fq$ with $x_1 \ne 0$, write $y=\frac{x_2}{x_1} \in \Fq \sm \{1\}$. Then we have the following.
\begin{itemize}
\item[(1)] Two points $(x_1,f(x_1)), (x_2,f(x_2)) \in G_f$ determine a $2$-secant line to $G_f$ if and only if the equation $\frac{f(x)-f(x_1)}{x-x_1}=\frac{f(x_2)-f(x_1)}{x_2-x_1}$ has exactly one solution $x=x_2$. In particular, if $f(x)=x^d$, then the two points $(x_1,x_1^d), (x_2,x_2^d) \in G_f$ determine a $2$-secant line to $G_f$ if and only if $\frac{y^d-1}{y-1} \in H_{q,d}$ has exactly one preimage $y$ under $g_d$. Furthermore, if $x_2 \ne 0$ or equivalently $y \ne 0$, by interchanging the roles of $x_1$ and $x_2$, the two points $(x_1,x_1^d), (x_2,x_2^d) \in G_f$ determine a $2$-secant line to $G_f$ if and only if $\frac{(1/y)^d-1}{1/y-1} \in H_{q,d}$ has exactly one preimage $1/y$ under $g_d$.
\item[(2)] For $f(x)=x^d$ and each $y \in \Fq \sm \{1\}$ such that $\frac{y^d-1}{y-1}$ has exactly one preimage $y$ under $g_d$, the $q-1$ pairs of distinct points $\{\{ (x_1,x_1^d), (x_2,x_2^d) \} \mid x_1, x_2 \in \Fq^*, \frac{x_2}{x_1}=y \}$ determine $q-1$ distinct $2$-secant lines to $G_f$. Moreover, suppose
    $$
    \{ y \in \Fq \sm\{1\} \mid \mbox{$\frac{y^d-1}{y-1}$ has exactly one preimage $y$ under $g_d$} \}=\{ y_1,y_1^{-1},y_2,y_2^{-1},\cdots,y_s,y_s^{-1},y_1^{\pr},y_2^{\pr},\cdots,y_t^{\pr} \},
    $$
    where no element in $\{y_1^{\pr},y_2^{\pr},\cdots,y_t^{\pr} \}$ is the inverse of any other element. Then there are exactly $(s+t)(q-1)$ distinct $2$-secant lines to $G_f$.
\item[(3)] The two points $(x_1,f(x_1)), (x_2,f(x_2)) \in G_f$ determine a $3$-secant line to $G_f$ if and only if the equation $\frac{f(x)-f(x_1)}{x-x_1}=\frac{f(x_2)-f(x_1)}{x_2-x_1}$ has exactly two solutions. In particular, if $f(x)=x^d$, then the two points $(x_1,x_1^d), (x_2,x_2^d) \in G_f$ determine a $3$-secant line to $G_f$ if and only if $\frac{y^d-1}{y-1} \in H_{q,d}$ has exactly two preimages under $g_d$.
\end{itemize}
\end{lemma}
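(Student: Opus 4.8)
The three parts rest on a single computation identifying which points of $G_f$ lie on a given secant line, so I would set that up once and then specialize.

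\emph{Parts (1) and (3).} For distinct $x_1,x_2$ the non-vertical line through $(x_1,f(x_1))$ and $(x_2,f(x_2))$ has slope $b=\frac{f(x_2)-f(x_1)}{x_2-x_1}$, and a point $(x,f(x))$ with $x\ne x_1$ lies on it exactly when $\frac{f(x)-f(x_1)}{x-x_1}=b$. Since $(x_1,f(x_1))$ always lies on the line, the number of points of $G_f$ on it equals $1$ plus the number of solutions $x\ne x_1$ of this equation; thus the line is an $i$-secant iff there are $i-1$ such solutions, which gives the stated $2$-secant criterion (a unique solution $x=x_2$) and, in (3), the $3$-secant criterion (exactly two solutions). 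To pass to $f(x)=x^d$ I would use $x_1\ne 0$ to substitute $x=x_1t$ and factor, via $\frac{(x_1t)^d-x_1^d}{x_1t-x_1}=x_1^{d-1}g_d(t)$ and likewise $\frac{x_2^d-x_1^d}{x_2-x_1}=x_1^{d-1}g_d(y)$ with $y=x_2/x_1$; cancelling $x_1^{d-1}\ne 0$ turns the equation into $g_d(t)=g_d(y)$ with $t\ne 1$, so its solutions $x\ne x_1$ are in bijection with the preimages of $g_d(y)\in H_{q,d}$ under $g_d$. Hence the $2$-secant (resp.\ $3$-secant) condition becomes ``$g_d(y)$ has exactly one (resp.\ two) preimages.'' For the ``furthermore'' clause I would observe that when $x_2\ne 0$ one may take $x_2$ as the base point, replacing $y$ by $1/y$, and run the identical argument on $g_d(1/y)$.

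\emph{Part (2).} Fix a good $y$ (so $y\ne 1$, and $y\ne 0$ so that both coordinates stay nonzero). For each $x_1\in\Fq^*$ set $x_2=yx_1\in\Fq^*$; by Part (1) the line through $(x_1,x_1^d)$ and $(x_2,x_2^d)$ is a $2$-secant, and since a $2$-secant meets $G_f$ in exactly those two points, distinct unordered pairs $\{x_1,yx_1\}$ span distinct lines. Because $\{x_1,yx_1\}=\{x_1',yx_1'\}$ forces $x_1=x_1'$ or $y^2=1$, for $y\ne\pm1$ the assignment $x_1\mapsto\{x_1,yx_1\}$ is injective and yields $q-1$ distinct $2$-secants. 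For the counting formula the crux is that reading a line off its other endpoint replaces the ratio $y$ by $1/y$: setting $x_1'=yx_1$ gives $\{x_1,yx_1\}=\{x_1',y^{-1}x_1'\}$, so the block of $q-1$ lines attached to $y$ coincides with that attached to $y^{-1}$. Thus each inverse-pair $\{y_i,y_i^{-1}\}$ accounts for a single block of $q-1$ lines, each lone $y_j'$ contributes its own block, and two blocks are disjoint since the only ratios realized by a fixed off-axis $2$-secant are $y$ and $y^{-1}$; summing gives $(s+t)(q-1)$.

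\emph{Main obstacle.} All the difficulty is the bookkeeping in Part (2), namely guaranteeing that every off-axis $2$-secant is counted exactly once. This forces attention to the self-intersecting ratios: the value $y=0$ corresponds to pairs through the origin $(0,0)$ and so yields no off-axis pair at all (it must be set aside even when it happens to be good, as for $x^3$ over $\F_{2^m}$), while the self-inverse value $y=-1$ (possible only for odd $q$) collapses the pairs $\{x_1,-x_1\}$ two-to-one and hence produces only $\tfrac{q-1}{2}$ lines rather than $q-1$. I would therefore verify, for the monomials at hand, that $-1$ is not good (equivalently that $g_d(-1)$ has more than one preimage) and treat the origin separately, so that the clean tally $(s+t)(q-1)$ really is the number of off-axis $2$-secants; isolating and checking these degenerate ratios, rather than any serious algebra, is the delicate step.
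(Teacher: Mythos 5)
The paper offers no proof of this lemma at all (it is introduced with ``the following lemma is easy to see''), so the comparison is with the straightforward argument the authors clearly have in mind. Your parts (1) and (3), and the generic case of part (2), are exactly that argument, carried out correctly: substituting $x=x_1t$ turns the slope equation into $g_d(t)=g_d(y)$ on $\Fq\sm\{1\}$, so the number of extra graph points on the secant equals the number of preimages of $g_d(y)$; and for $y\notin\{0,\pm1\}$ the map $x_1\mapsto\{x_1,yx_1\}$ is injective, with $y$ and $1/y$ labelling the same block of $q-1$ lines and distinct blocks disjoint.

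Your treatment of the degenerate ratios needs two adjustments, one in the lemma's favour and one in yours. First, the issue you raise at $y=-1$ is not something that can be ``verified for the monomials at hand'' inside this lemma, which is stated for arbitrary $d$: what you have found is that the count $(s+t)(q-1)$ is literally false whenever $-1$ is good. For instance, take $d=2$ and $q$ odd: $g_2(y)=y+1$ is injective, so every $y\ne 1$ is good, giving $s=\frac{q-3}{2}$ and $t=2$ (the lone elements being $0$ and $-1$); the formula predicts $\frac{(q+1)(q-1)}{2}$ two-secants, whereas $v_2(x^2)=\frac{q(q-1)}{2}$, and the deficit $\frac{q-1}{2}$ is exactly the overcount at $y=-1$ that you identified. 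So the lemma implicitly assumes $-1$ is not good; this is harmless downstream, because Lemma~\ref{lem-gdpreima} and Theorem~\ref{thm-necessary} exclude a good $-1$ for monomials satisfying \eqref{eqn-intdisdeg3othchar} (and in the one place the paper invokes the excluded configuration $(y^{\pr},z^{\pr})=(-1,0)$, in Theorem~\ref{thm-necessary}(3), the corrected contribution $\frac{3(q-1)}{2}$ still exceeds $q-1$, so the contradiction survives). Second, your final tally is asserted for ``off-axis'' two-secants, but the lemma's ``Moreover'' counts \emph{all} two-secants; when $0$ is good --- which is precisely the situation exploited in the $q$ even cases of Theorems~\ref{thm-necessary} and~\ref{thm-necesuffcubic}, e.g.\ $x^3$ over $\F_{2^m}$, where \emph{every} two-secant passes through the origin --- the $q-1$ lines through $(0,0)$ form the full block attached to $y^{\pr}=0$ (for this ratio the condition $x_2\in\Fq^*$ in the displayed pair set must be read as $x_2\in\Fq$), and they must be added back for $(s+t)(q-1)$ to equal $v_2(f)$. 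Setting $y=0$ aside, as you propose, proves a statement about off-axis secants that is consistent with, but is not, the lemma. With these two repairs your argument is a complete proof, and a more careful one than anything the paper provides.
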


In the case that $z \in H_{q,d}$ has exactly one preimage under $g_d$, we have the following lemma providing crucial information about the images and preimages of $g_d$.

\begin{lemma}\label{lem-gdpreima}
Let $f(x)=x^d$ be over $\Fq$. Suppose $z \in H_{q,d}$ has exactly one preimage $y \in \Fq \sm \{1\}$ under $g_d$. Then we have the following:
\begin{itemize}
\item[(1)] If $z=0$, then $q$ is odd, $d$ is even and $y=-1$.
\item[(2)] If $y \notin \{0,-1\}$, then $z \notin \{0,1\}$, and $y^{-d+1}z \notin \{0,1,z\}$ also has exactly one preimage $\frac{1}{y} \in \Fq \sm \{0,\pm1\}$.
\item[(3)] If $q$ is even and $H_{q,d}$ has exactly one element $z$ with exactly one preiamge $y$ under $g_d$, then $(y,z)=(0,1)$.
\item[(4)] If $q$ is odd and $H_{q,d}$ has exactly two elements $z, z^{\pr}$ with exactly one preimage under $g_d$, say $y, y^{\pr}$ respectively, then either $(y,z)=(0,1)$ and $(y^{\pr},z^{\pr})=(-1,0)$, or $y \notin \{0,-1\}$, $y^{\pr}=\frac{1}{y}$ and $z^{\pr}=y^{-d+1}z$.
\end{itemize}
\end{lemma}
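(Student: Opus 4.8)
The plan is to reduce everything to two elementary identities for $g_d$ and then to bootstrap parts (3) and (4) out of part (2). Throughout I use that the preimages of $z$ under $g_d$ in $\Fq\sm\{1\}$ are exactly the roots $\neq 1$ of $X^d-zX+(z-1)$, and that $X=1$ is always such a root. For part (1), I would note that $g_d(y)=0$ with $y\neq 1$ is equivalent to $y^d=1$, so the preimages of $0$ are precisely the nontrivial $d$-th roots of unity in $\Fq$, of which there are $\gcd(d,q-1)-1$. Exactly one preimage forces $\gcd(d,q-1)=2$, which forces $q$ odd and $d$ even and identifies the unique nontrivial square root of unity as $y=-1$.

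For part (2) I would first record the two identities $g_d(0)=1$ and, for $y\notin\{0,1\}$, $g_d(1/y)=y^{1-d}g_d(y)$, both immediate from the definition. From these: $z\neq 0$ (else part (1) would force the unique preimage to be $-1$, against $y\neq -1$) and $z\neq 1$ (else $0$ would be a second preimage besides $y\neq 0$). Setting $z':=y^{1-d}z=g_d(1/y)$, the element $1/y\in\Fq\sm\{0,\pm1\}$ is a preimage of $z'$; moreover $z'\neq 0$, while $z'=1$ or $z'=z$ would each force $y^{d-1}=1$ and hence $z=1$, a contradiction. The only substantial point is that $z'$ has \emph{no other} preimage, and for this the plan is to use the scaling $w\mapsto w/y$. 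A direct computation gives $g_d(w/y)=y^{1-d}\,\frac{w^d-y^d}{w-y}$, and since $y^d-zy=1-z$ (as $y$ is a preimage of $z$), one checks that for $w\neq y$ the condition $g_d(w/y)=z'$ is equivalent to $w^d-zw+(z-1)=0$, i.e.\ to $w$ being a preimage of $z$ or $w=1$. Thus $w\mapsto w/y$ is a bijection from the preimages of $z$ together with $w=1$ onto the preimages of $z'$ together with $w/y=1$, so $z$ and $z'$ have equally many preimages; as $z$ has exactly one, so does $z'$, namely $1/y$. This bijection is the algebraic incarnation of the ``interchange of $x_1$ and $x_2$'' in Lemma~\ref{lem-secpreima}(1).

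Parts (3) and (4) are then short. For (3), with $q$ even one has $-1=1$, so the unique special preimage $y$ satisfies $y\notin\{0,-1\}$ as soon as $y\neq 0$; part (2) would then produce a second element $z'\neq z$ with a unique preimage, contradicting uniqueness, so $y=0$ and $z=g_d(0)=1$. For (4), if one of the two special preimages, say $y$, lies outside $\{0,-1\}$, then part (2) yields $y^{1-d}z\neq z$ with unique preimage $1/y$; since only two special elements exist this forces $z'=y^{1-d}z$ and $y'=1/y$. Otherwise both preimages lie in $\{0,-1\}$, hence $\{y,y'\}=\{0,-1\}$, and since $g_d(0)=1$ while $g_d(-1)=0$ or $1$ according as $d$ is even or odd, the odd case is excluded (it would make $0$ and $-1$ two preimages of $1$), leaving $\{(y,z),(y',z')\}=\{(0,1),(-1,0)\}$.

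The delicate step—the main obstacle—is the uniqueness claim in part (2). The tempting reciprocal substitution $w\mapsto 1/w$ does \emph{not} do the job: it sends the preimage equation of $z$ to the reversed polynomial rather than to that of $z'$, so the two preimage sets are not matched. The correct device is the scaling $w\mapsto w/y$ by the fixed preimage $y$, which aligns the two sets precisely because $y$ is itself a root of $X^d-zX+(z-1)$; getting this matching right, together with the bookkeeping of the excluded points $w=1$ and $w=y$, is the heart of the argument, and once it is in place parts (3) and (4) are pure bookkeeping.
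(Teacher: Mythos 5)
Your proof is correct, and its overall architecture coincides with the paper's: part (1) by counting nontrivial $d$-th roots of unity via $\gcd(d,q-1)$, part (2) through the identity $g_d(1/y)=y^{1-d}g_d(y)$ together with the exclusions $z\notin\{0,1\}$, and parts (3) and (4) by the same bootstrapping from part (2), including the parity argument $g_d(-1)\in\{0,1\}$ in part (4). The one place where you genuinely diverge is the step you yourself flag as delicate: the claim in part (2) that $z'=y^{1-d}z$ has \emph{exactly one} preimage. The paper disposes of this by citing Lemma~\ref{lem-secpreima}(1) --- the geometric statement that interchanging the two points of a would-be $2$-secant replaces the ratio $y$ by $1/y$ --- a lemma the paper declares ``easy to see'' and never proves. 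You instead prove the preimage-count equality outright: the scaling $w\mapsto w/y$, legitimate precisely because $y$ is itself a root of $X^d-zX+(z-1)$, carries the root set of that polynomial bijectively onto the root set of the corresponding polynomial for $z'$, so $z$ and $z'$ have equally many preimages. This buys self-containedness (and usefully records why the naive substitution $w\mapsto 1/w$ does not work), at the cost of redoing algebraically what the paper gets by reusing a lemma it needs anyway for Theorems~\ref{thm-necessary} and~\ref{thm-necesuffcubic}. The remaining details in your write-up (the exclusions $z'\notin\{0,1,z\}$, the case analysis in part (4)) match the paper's argument, so the two proofs have the same logical content and differ only in whether the key symmetry is imported or verified.
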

\begin{proof}
(1) If $0 \in H_{q,d}$ has exactly one preimage $y$ under $g_d$, then $(d,q-1)=2$. Thus $q$ is odd and $d$ is even, which implies $y=-1$.

(2) Since $y \ne -1$, by Part (1), $z \ne 0$ and $\frac{1}{y} \ne -1$. Since $y \ne 0$, then $z \ne 1$ and $y^{-d+1} \ne 1$, which implies $y^{-d+1}z \ne z$ and $y \ne 1$. Since $z=\frac{y^d-1}{y-1}$, we have $y^{-d+1}z= \frac{(1/y)^d-1}{(1/y)-1}$. By Lemma~\ref{lem-secpreima}(1), $y^{-d+1}z= \frac{(1/y)^d-1}{(1/y)-1}$ has exactly one preimage $\frac{1}{y} \in \Fq \sm \{0, \pm 1\}$ under $g_d$. As the image of $\frac{1}{y} \notin \{0,\pm 1\}$ under $g_d$, the element $y^{-d+1}z \notin \{0,1\}$.

(3) Since $H_{q,d}$ has exactly one element $z$ with exactly one preimage $y$ under $g_d$, by Part (2), $y \in \{0,-1\}$. Note that $q$ being even forces $y=0$. Consequently, $(y,z)=(0,1)$.

(4) If $y \notin \{0,-1\}$, then by Part (2), we have $y^{\pr}=\frac{1}{y}$ and $z^{\pr}=y^{-d+1}z$. If $y \in \{0,-1\}$, first, assume $y=0$ and therefore $z=1$. Suppose $y^{\pr} \notin \{0,-1\}$, then by Part (2), $z=1,z^{\pr},y^{\pr -d+1}z^{\pr}$ are distinct and all have exactly one preimage under $g_d$, which is impossible. Hence, $y^{\pr}=-1$ and $z^{\pr} \in \{0,1\}$. Note that $z=1$ has exactly one preimage, then $z^{\pr} \ne 1$ and $(y^{\pr},z^{\pr})=(-1,0)$.
\end{proof}

Now we are ready to derive some restrictions on monomials satisfying \eqref{eqn-intdisdeg3othchar}.

\begin{theorem}\label{thm-necessary}
Let $f(x)=x^d$ be over $\Fq$ satisfying \eqref{eqn-intdisdeg3othchar}. Then
\begin{itemize}
\item[(1)] Each element in $H_{q,d}$ has either one or two preimages under $g_d$. Furthermore, the number of elements in $H_{q,d}$ having exactly one preimage under $g_d$ is either one or two.
\item[(2)] If $q$ is even, then there exists exactly one element $z \in H_{q,d}$ with exactly one preimage $y$ under $g_d$, where $(y,z)=(0,1)$.
\item[(3)] If $q$ is odd, then there exist exactly two elements $z, z^{\pr} \in H_{q,d}$ with exactly one preimage under $g_d$, say $y, y^{\pr}$ respectively, where $y \notin \{0,-1\}$, $y^{\pr}=\frac{1}{y}$ and $z^{\pr}=y^{-d+1}z$.
\item[(4)] $$
             (d,q-1)=\begin{cases}
               1 & \mbox{if $0 \notin H_{q,d}$,} \\
               3 & \mbox{if $0 \in H_{q,d}$.}
             \end{cases}
           $$
\end{itemize}
\end{theorem}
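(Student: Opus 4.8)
The plan is to derive all four parts from one principle: since \eqref{eqn-intdisdeg3othchar} has $v_i(f)=0$ for every $i\ge 4$, no line meets $G_f$ in more than three points. I would first exploit this for the lines through the fixed point $(1,1)\in G_f$. The line through $(1,1)$ with slope $z$ meets $G_f$ in $(1,1)$ together with the points $(x,x^d)$, $x\ne 1$, satisfying $g_d(x)=z$, so it is a $(1+|g_d^{-1}(z)|)$-secant; the three-point bound then gives $1\le|g_d^{-1}(z)|\le 2$ for every $z\in H_{q,d}$, which is the first assertion of Part~(1). Next I would write $S$ for the set of $y\in\Fq\sm\{1\}$ whose image $g_d(y)$ has $y$ as its only preimage and decompose it as in Lemma~\ref{lem-secpreima}(2) into $s$ inverse-pairs $\{y_i,y_i^{-1}\}$ and $t$ unpaired elements. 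That lemma yields $(s+t)(q-1)$ distinct $2$-secants, so comparing with $v_2(f)=q-1$ forces $s+t=1$ and hence $|S|=2s+t\in\{1,2\}$, the second assertion of Part~(1).

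The crux, and what drives Parts~(2) and (3), is a parity count. The fibres of $g_d$ partition its domain $\Fq\sm\{1\}$ of size $q-1$, and each has size $1$ or $2$, so the number $|S|$ of size-one fibres satisfies $|S|\equiv q-1\pmod 2$. Together with $|S|\in\{1,2\}$ this pins $|S|$ down completely: $|S|=1$ for $q$ even and $|S|=2$ for $q$ odd. In the even case this is exactly the hypothesis of Lemma~\ref{lem-gdpreima}(3), which returns $(y,z)=(0,1)$ and proves Part~(2). In the odd case I would feed $|S|=2$ into Lemma~\ref{lem-gdpreima}(4): its first alternative $(y,z)=(0,1)$, $(y^{\pr},z^{\pr})=(-1,0)$ consists of two unpaired points and so would force $s+t=2$, contradicting $s+t=1$; hence the second alternative must hold, giving $y\notin\{0,-1\}$, $y^{\pr}=1/y$ and $z^{\pr}=y^{-d+1}z$, which is Part~(3).

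For Part~(4) I would argue by divisibility. One has $0\in H_{q,d}$ precisely when some $x\ne 1$ satisfies $x^d=1$, i.e. when $\gcd(d,q-1)\ge 2$; thus $0\notin H_{q,d}$ is tautologically equivalent to $\gcd(d,q-1)=1$, settling the first case. When $0\in H_{q,d}$ its fibre is $\{x\ne 1\mid x^d=1\}$, of cardinality $\gcd(d,q-1)-1$, and the three-point bound forces $\gcd(d,q-1)-1\le 2$, so $\gcd(d,q-1)\in\{2,3\}$. To discard $2$ I would note that $\gcd(d,q-1)=2$ makes $q$ odd and leaves $0$ with the single preimage $-1$, so $-1\in S$; but Part~(3) has just shown that the two elements of $S$ form an inverse pair with representative outside $\{0,-1\}$, so $-1\notin S$, a contradiction. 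Hence $\gcd(d,q-1)=3$.

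The routine ingredients --- passing from $v_2(f)=q-1$ to $s+t=1$ through Lemma~\ref{lem-secpreima}(2), and the two small preimage computations for the values $0$ and $1$ --- should cause no trouble. I expect the only genuine subtlety to be the bookkeeping around $0$ and $-1$, which are exactly the points where the inversion $y\mapsto 1/y$ underlying Lemma~\ref{lem-gdpreima} degenerates: one must track them separately, and it is their interplay with the parity of $q-1$ that produces the clean even/odd split governing Parts~(2)--(4).
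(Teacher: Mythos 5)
Your proposal is correct and follows essentially the same route as the paper: the three-point bound to cap fibre sizes, Lemma~\ref{lem-secpreima}(2) compared against $v_2(f)=q-1$ to bound the number of single-preimage elements, the parity of $|\Fq\sm\{1\}|$ to split the even/odd cases, Lemma~\ref{lem-gdpreima}(3)--(4) for Parts~(2)--(3), and the contradiction with Part~(3) to exclude $\gcd(d,q-1)=2$ in Part~(4). Your only departures are cosmetic refinements --- deriving the fibre bound directly from lines through $(1,1)$, pinning down $s+t=1$ exactly rather than excluding $0$ and $\ge 2$, and computing the fibre of $0$ explicitly instead of citing Lemma~\ref{lem-gdpreima}(1).
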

\begin{proof}
(1) Since $v_i(f)=0$ for each $i>3$, then by Lemma~\ref{lem-secpreima}, each element in $H_{q,d}$ has either one or two preimages. Consider the number of elements in $H_{q,d}$, which has exactly one preimage. By Lemmas~\ref{lem-secpreima}(2), if this number is either zero or at least three, then $v_2(f)=0$ or $v_2(f) \ge 2(q-1)$, which contradicts \eqref{eqn-intdisdeg3othchar}. Hence, the number is either one or two.

(2) If $q$ is even, then the preimage set $\Fq \sm \{1\}$ has odd size $q-1$. Combining Part (1) and the parity, there exists exactly one element $z$ in $H_{q,d}$, which has exactly one preimage $y$ under $g_d$. By Lemma~\ref{lem-gdpreima}(3), we have $(y,z)=(0,1)$.

(3) If $q$ is odd, then the preimage set $\Fq \sm \{1\}$ has even size $q-1$. Combining Part (1) and the parity, there exists two elements $z, z^{\pr} \in H_{q,d}$ with exactly one preimage under $g_d$. Suppose $z=g_d(y)$ and $z^{\pr}=g_d(y^{\pr})$. By Lemma~\ref{lem-gdpreima}(4), we have either $(y,z)=(0,1)$ and $(y^{\pr},z^{\pr})=(-1,0)$, or $y \notin \{0,-1\}$, $y^{\pr}=\frac{1}{y}$ and $z^{\pr}=y^{-d+1}z$. According to Lemma~\ref{lem-secpreima}(2), the former case implies $v_2(f) \ge 2(q-1)$, which contradicts \eqref{eqn-intdisdeg3othchar}.

(4) If $0 \notin H_{q,d}$, then clearly $(d,q-1)=1$. If $0 \in H_{q,d}$, then $0$ has exactly either one or two preimages under $g_d$. If $0$ has exactly one preimage under $g_d$, then by Lemma~\ref{lem-gdpreima}(1), we have $q$ being odd and the preimage is $-1$. This contradicts Part (3). Hence, $0$ has exactly two preimages under $g_d$ and therefore $(d,q-1)=3$.
\end{proof}

Consequently, we have the following necessary and sufficient condition characterizing monomials over $\Fq$ satisfying \eqref{eqn-intdisdeg3othchar}, when $q$ is not divisible by $3$.

\begin{theorem}\label{thm-necesuffcubic}
Let $f(x)=x^d$ be over $\Fq$ where $(3,q)=1$. Then $f$ satisfies \eqref{eqn-intdisdeg3othchar} if and only if
one of the following holds.
\begin{itemize}
\item[(1)] If $q$ is even, then $0$ is the only preimage of $1 \in H_{q,d}$ under $g_d$ and $g_d|_{\Fq \sm \{0,1\}}$ is $2$-to-$1$.
\item[(2)] If $q$ is odd, then there exist exactly two elements $z, z^{\pr} \in H_{q,d}$ with exactly one preimage under $g_d$, say $y, y^{\pr}$ respectively, where $y \notin \{0,-1\}$, $y^{\pr}=\frac{1}{y}$ and $z^{\pr}=y^{-d+1}z$, and $g_d|_{\Fq\sm \{1,y,y^{\pr}\}}$ is $2$-to-$1$.
\end{itemize}
In both $q$ even and odd cases, we have
$$
(d,q-1)=\begin{cases}
           1 & \mbox{if $0 \notin H_{q,d}$,} \\
           3 & \mbox{if $0 \in H_{q,d}$.}
        \end{cases}
$$
\end{theorem}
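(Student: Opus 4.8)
The plan is to prove the two implications separately. The necessity direction essentially repackages Theorem~\ref{thm-necessary}, while the sufficiency direction runs a counting argument backward through Proposition~\ref{prop-basiceqn}.

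\emph{Necessity.} Suppose $f$ satisfies \eqref{eqn-intdisdeg3othchar}. Parts (2), (3) and (4) of Theorem~\ref{thm-necessary} already furnish the single-preimage elements, the relations $y^{\pr}=\frac{1}{y}$ and $z^{\pr}=y^{-d+1}z$ (for $q$ odd) or $(y,z)=(0,1)$ (for $q$ even), and the value of $(d,q-1)$. The only thing left is the $2$-to-$1$ assertion. Since Theorem~\ref{thm-necessary}(1) guarantees that every element of $H_{q,d}$ has exactly one or two preimages, and the elements having exactly one preimage are precisely $1$ (when $q$ is even, with preimage $0$) and $z,z^{\pr}$ (when $q$ is odd, with preimages $y,y^{\pr}$), deleting these single-preimage fibres from the domain $\Fq\sm\{1\}$ leaves a map all of whose fibres have size two. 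Hence $g_d$ restricted to $\Fq\sm\{0,1\}$ is $2$-to-$1$ when $q$ is even, and $g_d$ restricted to $\Fq\sm\{1,y,y^{\pr}\}$ is $2$-to-$1$ when $q$ is odd.

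\emph{Sufficiency.} Assume (1) or (2) holds, and set $S=\{y\in\Fq\sm\{1\}\mid g_d(y)\text{ has exactly one preimage}\}$. The hypotheses pin down $S$ immediately: when $q$ is even the only fibre of size one is $g_d^{-1}(1)=\{0\}$, so $S=\{0\}$; when $q$ is odd the single-preimage images are $z,z^{\pr}$ with preimages $y,y^{\pr}=\frac{1}{y}$ and $y\ne\pm1$, so $S=\{y,y^{-1}\}$ is a single inverse pair. Feeding $S$ into Lemma~\ref{lem-secpreima}(2) gives $v_2(f)=(s+t)(q-1)=q-1$ in both cases ($s=0,\,t=1$ for $q$ even, since $0$ has no multiplicative inverse; $s=1,\,t=0$ for $q$ odd). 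It then remains to show $v_i(f)=0$ for all $i\ge4$, after which the three identities of Proposition~\ref{prop-basiceqn}, now a linear system in $v_0(f),v_1(f),v_3(f)$ with $v_2(f)=q-1$, have the unique solution \eqref{eqn-intdisdeg3othchar}. For the vanishing, I would note that the hypotheses force every value of $g_d$ to have at most two preimages, and that the substitution $x=x_1u$ turns the equation for the points of $G_f$ on a non-vertical line through a fixed nonzero point $(x_1,x_1^d)$ into $g_d(u)=c$; such a line therefore meets $G_f$ in $1+|g_d^{-1}(c)|\le3$ points. Since any line meeting $G_f$ in $k\ge2$ points contains at least $k-1\ge1$ nonzero points, every non-vertical line is at most a $3$-secant, so $v_i(f)=0$ for $i\ge4$.

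Finally, for the gcd statement one observes that $0\in H_{q,d}$ exactly when $g_d$ vanishes somewhere, i.e.\ when some $d$-th root of unity differs from $1$, i.e.\ when $\gcd(d,q-1)>1$; moreover the fibre $g_d^{-1}(0)$ consists precisely of the nontrivial $d$-th roots of unity, so $|g_d^{-1}(0)|=\gcd(d,q-1)-1$. Because $0$ is never a single-preimage image (the unique such image is $1$ when $q$ is even, and $z,z^{\pr}\ne0$ when $q$ is odd by $z^{\pr}=y^{-d+1}z$ and $z\ne z^{\pr}$), the fibre of $0$ has size two whenever it is nonempty, forcing $\gcd(d,q-1)=3$; otherwise $\gcd(d,q-1)=1$. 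The step I expect to demand the most care is the sufficiency claim that $v_i(f)=0$ for $i\ge4$: Lemma~\ref{lem-secpreima} as stated only characterises $2$- and $3$-secants, so one must justify the general point-count versus fibre-size correspondence and separately check that lines through the origin (the fibre over $u=0$) introduce no secants of order exceeding three.
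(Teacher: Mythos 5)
Your proposal is correct and takes essentially the same route as the paper: necessity is read off from Theorem~\ref{thm-necessary}, and sufficiency is obtained by using Lemma~\ref{lem-secpreima} to get $v_2(f)=q-1$, forcing $v_i(f)=0$ for $i>3$ from the at-most-two-preimages property, and then letting Proposition~\ref{prop-basiceqn} pin down $v_0(f),v_1(f),v_3(f)$. The extra details you supply (the substitution $x=x_1u$ reducing secant counts to fibre sizes of $g_d$, with lines through the origin handled by choosing a nonzero base point, and the computation $|g_d^{-1}(0)|=\gcd(d,q-1)-1$ for the gcd claim) merely make explicit what the paper leaves implicit.
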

\begin{proof}
The necessity follows from Theorem~\ref{thm-necessary} and we only need to consider the sufficiency. For Part (1), by employing Lemma~\ref{lem-secpreima} and Theorem~\ref{thm-necessary}(2), we have $v_2(f)=q-1$ and $v_i(f)=0$ for each $i>3$. For Part (2), by employing Lemma~\ref{lem-secpreima} and Theorem~\ref{thm-necessary}(3), we have $v_2(f)=q-1$ and $v_i(f)=0$ for each $i>3$. Together with Proposition~\ref{prop-basiceqn}, we conclude that $f$ satisfies \eqref{eqn-intdisdeg3othchar}. The greatest common divisor $(d,q-1)$ follows from the $2$-to-$1$ property.
\end{proof}

Similarly, we have the following necessary and sufficient condition characterizing monomials over $\Fq$ satisfying \eqref{eqn-intdisdeg3char3}, when $q$ is a power of $3$.

\begin{theorem}\label{thm-necesuffcubicchar3}
Let $f(x)$ be over $\Fq=\F_{3^m}$. Then $f$ satisfies \eqref{eqn-intdisdeg3char3} if and only if for each $y \in \Fq$, the function $\left.\frac{f(x+y)-f(y)}{x} \right|_{\Fq^*}$ is $2$-to-$1$. In particular, $f(x)=x^d$ satisfies \eqref{eqn-intdisdeg3char3} if and only if the following holds:
\begin{itemize}
\item[(1)] $\gcd(d-1,q-1)=2$,
\item[(2)] $\left.g_d\right|_{\Fq \sm \{1\}}$ is $2$-to-$1$, which implies
           $$
             (d,q-1)=\begin{cases}
               1 & \mbox{if $0 \notin H_{q,d}$,} \\
               3 & \mbox{if $0 \in H_{q,d}$.}
             \end{cases}
           $$
\end{itemize}
\end{theorem}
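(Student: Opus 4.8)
The plan is to first prove the general "for each $y$" equivalence for an arbitrary $f$, and then specialize to monomials through a change of variable that converts the displayed difference quotient into $g_d$. Throughout I write $\phi_y(x)=\frac{f(x+y)-f(y)}{x}$ for $x\in\Fq^*$, and I exploit the elementary fact that the non-vertical line through the graph point $(y,f(y))$ with slope $b$ meets $G_f$ in exactly $1+|\{x\in\Fq^*:\phi_y(x)=b\}|$ points, the leading $1$ accounting for $(y,f(y))$ itself. Thus the fibre sizes of $\phi_y$ govern the secant structure through $(y,f(y))$: the line of slope $b$ is a $1$-secant when $b$ lies off the image of $\phi_y$, and an $i$-secant with $i\ge 2$ when $b$ has exactly $i-1$ preimages under $\phi_y$.

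For the general equivalence, the key observation is that, by Proposition~\ref{prop-basiceqn}, condition \eqref{eqn-intdisdeg3char3} is equivalent to the single requirement that $v_2(f)=0$ together with $v_i(f)=0$ for all $i\ge 4$; indeed, once $v_2$ and the higher $v_i$ vanish, the three basic equations determine $v_0,v_1,v_3$ uniquely and recover the exact values in \eqref{eqn-intdisdeg3char3}. Geometrically this says every non-vertical line meets $G_f$ in $0$, $1$, or $3$ points. For lines through a graph point $(y,f(y))$ this forces every fibre of $\phi_y$ to have size $0$ or $2$, i.e.\ $\phi_y$ to be $2$-to-$1$; conversely, if every $\phi_y$ is $2$-to-$1$, then each line through a graph point is a $1$- or $3$-secant, and every remaining non-vertical line misses $G_f$, so only secants of order $0,1,3$ occur. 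This yields the first equivalence.

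To handle $f(x)=x^d$, I would compute $\phi_y$ explicitly. For $y=0$ one gets $\phi_0(x)=x^{d-1}$ on $\Fq^*$, and the power map $x\mapsto x^{d-1}$ is $2$-to-$1$ precisely when $\gcd(d-1,q-1)=2$, which is condition (1). For $y\ne 0$, the substitution $t=1+x/y$ is a bijection from $\Fq^*$ onto $\Fq\sm\{1\}$, and gives $\phi_y(x)=y^{d-1}\,\frac{t^d-1}{t-1}=y^{d-1}g_d(t)$; since multiplication by the nonzero constant $y^{d-1}$ is a bijection of $\Fq$, the map $\phi_y$ is $2$-to-$1$ exactly when $g_d|_{\Fq\sm\{1\}}$ is $2$-to-$1$, uniformly in $y$. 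Hence demanding that all $\phi_y$ be $2$-to-$1$ is equivalent to conditions (1) and (2) together. Finally, for the gcd consequence in (2), I would count the preimages of $0$ under $g_d$: these are exactly the $y\ne 1$ with $y^d=1$, numbering $\gcd(d,q-1)-1$; the $2$-to-$1$ property forces this count to be $0$ or $2$, so $\gcd(d,q-1)\in\{1,3\}$, with $0\notin H_{q,d}$ giving $1$ and $0\in H_{q,d}$ giving $3$.

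The step demanding the most care is the general equivalence: one must check that the single $2$-to-$1$ condition simultaneously rules out $2$-secants \emph{and} all secants of order $\ge 4$, and that, through Proposition~\ref{prop-basiceqn}, the vanishing of $v_2$ and of the higher $v_i$ pins down the entire distribution \eqref{eqn-intdisdeg3char3} rather than merely the secant counts at the single point $(y,f(y))$. Once that bookkeeping is settled, the monomial computation via $t=1+x/y$ and the concluding gcd count are routine.
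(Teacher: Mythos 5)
Your proposal is correct and follows essentially the same route as the paper: both reduce the distribution \eqref{eqn-intdisdeg3char3} (via Proposition~\ref{prop-basiceqn}) to the statement that every non-vertical line meets $G_f$ in $0$, $1$, or $3$ points, translate this into the $2$-to-$1$ property of the difference quotients $\frac{f(x+y)-f(y)}{x}$, and then specialize to $f(x)=x^d$ by the substitution $x=y(t-1)$, which turns the quotient into $y^{d-1}g_d(t)$, with the gcd statement obtained by counting preimages of $0$ under $g_d$. Your write-up is in fact somewhat more explicit than the paper's (which asserts the equivalence with the "unique third collinear point" property and the gcd consequence without spelling out the bookkeeping), but the ideas and decomposition are identical.
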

\begin{proof}
A polynomial $f$ has intersection distribution \eqref{eqn-intdisdeg3char3} if and only if every two distinct points in $G_f$ lead to a unique third point in $G_f$, which lies on the line determined by these two points. Hence, for two distinct $x_1, x_2 \in \Fq$, the equation $\frac{f(x)-f(x_1)}{x-x_1}=\frac{f(x_2)-f(x_1)}{x_2-x_1}$ has a unique solution $x \in \Fq \sm \{x_1,x_2\}$. Equivalently, for each $y \in \Fq$, we have $\left.\frac{f(x)-f(y)}{x-y} \right|_{\Fq \sm \{y\}}$ is $2$-to-$1$. Therefore, $f$ has intersection distribution \eqref{eqn-intdisdeg3char3} if and only if the function $\left.\frac{f(x+y)-f(y)}{x} \right|_{\Fq^*}$ is $2$-to-$1$ for each $y \in \Fq$. Consider $f(x)=x^d$. For $y=0$, the mapping $\left.\frac{f(x)}{x}=x^{d-1} \right|_{\Fq^*}$ is $2$-to-$1$ if and only if $(d-1,q-1)=2$. For $y \in \Fq^*$, the mapping $\left.\frac{f(x)-f(y)}{x-y}=\frac{x^d-y^d}{x-y} \right|_{\Fq \sm \{y\}}$ is $2$-to-$1$ if and only if $\left.g_d\right|_{\Fq \sm \{1\}}$ is $2$-to-$1$. The greatest common divisor $(d,q-1)$ follows from the $2$-to-$1$ property.
\end{proof}

Theorems~\ref{thm-necesuffcubic} and \ref{thm-necesuffcubicchar3} can be viewed as analogies of \cite[Theorem 8.22, Corollary 8.24]{Hirs}, which give characterizations of o-polynomials and o-monomials. The strict restrictions in these theorems indicate that monomials with intersection distribution \eqref{eqn-intdisdeg3othchar} or \eqref{eqn-intdisdeg3char3} are very rare. Actually, these two theorems help to significantly reduce the computational complexity of verifying whether a monomial has intersection distribution \eqref{eqn-intdisdeg3othchar} or \eqref{eqn-intdisdeg3char3}, which leads to Conjecture~\ref{conj-sameintdiscubic}.

\section{Nonisomorphic Steiner triple systems arising from monomials}\label{sec4}

In this section, we shall observe that a polynomial over $\F_{3^m}$ with intersection distribution \eqref{eqn-intdisdeg3char3} produces a Steiner triple system. More interestingly, some nonisomorphic Steiner triple systems are obtained by employing distinct polynomials over $\F_{3^m}$.

Recall that a Steiner triple system $\STS(v)$ is a set system $(\cV,\cB)$, where $\cV$ is a point set of $v$ elements, and $\cB$ is a block set consisting of distinct $3$-subsets, such that every two points are contained in exactly one block. Two Steiner triple systems $(\cV_1,\cB_1)$ and $(\cV_2,\cB_2)$ are isomorphic, if there exists a bijection between $\cV_1$ and $\cV_2$, which also induces a bijection between $\cB_1$ and $\cB_2$. For a comprehensive survey about Steiner triple systems, please refer to \cite[Section II.2]{Col}. The following is a primary example of Steiner triple systems $\STS(v)$ with $v$ being a power of $3$.

\begin{example}{\rm (\cite[Section II.2, Theorem 2.10]{Col})}
Let $\cV=\F_{3^m}$ and
$$
\cB=\{\{x_1,x_2,x_3\} \mid \mbox{$x_1,x_2,x_3 \in \F_{3^m}$ distinct, $x_1+x_2+x_3=0$}\}.
$$
Then $(\cV,\cB)$ forms an $\STS(3^m)$. In another word, the points and lines in the affine geometry $AG(m,3)$ generate an $\STS(3^m)$, which is therefore named an \emph{affine triple system}.
\end{example}

Next, we propose a construction of $\STS(3^m)$ arising from polynomials over $\F_{3^m}$. For a polynomial $f$ over $\Fq$, it is called \emph{$\Fp$-linearized}, if $f(x+y)=f(x)+f(y)$ for each $x,y \in \Fq$ and $f(ax)=af(x)$ for each $x \in \Fq$ and $a \in \Fp$. Note that $f$ is $\Fp$-linearized only if $f(0)=0$.

\begin{theorem}\label{thm-STS}
Let $f$ be a polynomial over $\F_{3^m}$ intersection distribution \eqref{eqn-intdisdeg3char3}. Let $\cV=\F_{3^m}$ and
$$
\cB_f=\{ \{x_1,x_2,x_3\} \mid \mbox{$x_1,x_2,x_3 \in \F_{3^m}$ distinct, $\frac{f(x_3)-f(x_1)}{x_3-x_1}=\frac{f(x_2)-f(x_1)}{x_2-x_1}$}\}.
$$
Then $(\cV,\cB_f)$ is an $\STS(3^m)$. Moreover,
\begin{itemize}
\item[(1)] if $f^{\pr}(x)=f(x)+bx+c$, then $(\cV,\cB_f)$ and $(\cV,\cB_{f^{\pr}})$ are the same.
\item[(2)] if $f$ is a permutation, then $(\cV,\cB_f)$ and $(\cV,\cB_{f^{-1}})$ are isomorphic.
\item[(3)] if $f(0)=0$, then $(\cV,\cB_f)$ is an affine triple system if and only if $f$ is an $\F_3$-linearized polynomial.
\end{itemize}
\end{theorem}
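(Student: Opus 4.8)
The plan is to translate the combinatorial definition of $\cB_f$ into the geometric language already used in the proof of Theorem~\ref{thm-necesuffcubicchar3}. A triple $\{x_1,x_2,x_3\}$ of distinct elements lies in $\cB_f$ precisely when the three points $(x_1,f(x_1)),(x_2,f(x_2)),(x_3,f(x_3))$ of $G_f$ are collinear in $AG(2,3^m)$; since collinearity is a symmetric relation, the slope equation does not depend on which index is taken as base point, so $\cB_f$ genuinely consists of $3$-subsets. To prove $(\cV,\cB_f)$ is an $\STS(3^m)$ I would use that $f$ satisfies \eqref{eqn-intdisdeg3char3}, hence $v_2(f)=0$ and $v_i(f)=0$ for $i\ge 4$, so every non-vertical line meets $G_f$ in $0$, $1$, or $3$ points. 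Given distinct $x_1\ne x_2$, the line through $(x_1,f(x_1))$ and $(x_2,f(x_2))$ is non-vertical and already meets $G_f$ in two points, hence in exactly three; the unique third point $(x_3,f(x_3))$ gives a block containing $\{x_1,x_2\}$, and uniqueness follows because two points determine the line and the line determines its third intersection. This is exactly the assertion that every pair lies in a unique block.

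For Part~(1) I would substitute $f^{\pr}(x)=f(x)+bx+c$ into the defining slope equation. Because $\frac{f^{\pr}(x_j)-f^{\pr}(x_i)}{x_j-x_i}=\frac{f(x_j)-f(x_i)}{x_j-x_i}+b$, the constant $b$ cancels from both sides, so the condition for $f^{\pr}$ coincides with that for $f$ and $\cB_{f^{\pr}}=\cB_f$. (By Remark~\ref{rem-normalization}(2), $f^{\pr}$ has the same intersection distribution as $f$, so $\cB_{f^{\pr}}$ is indeed an $\STS$.) For Part~(2), the coordinate swap $\tau\colon(x,y)\mapsto(y,x)$ is a collineation of $AG(2,3^m)$ carrying $G_f$ onto $G_{f^{-1}}$, and by Remark~\ref{rem-normalization}(3) the permutation $f^{-1}$ again satisfies \eqref{eqn-intdisdeg3char3}. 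Since $\tau$ preserves collinearity, the point bijection $x\mapsto f(x)$ of $\cV$ sends a collinear triple $\{(x_i,f(x_i))\}$ to the collinear triple $\{(f(x_i),f^{-1}(f(x_i)))\}=\{\tau(x_i,f(x_i))\}$; reading off first coordinates, it maps $\{x_1,x_2,x_3\}\in\cB_f$ to $\{f(x_1),f(x_2),f(x_3)\}\in\cB_{f^{-1}}$, and the symmetric argument for $f^{-1}$ shows this is a block bijection, giving the isomorphism.

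Part~(3) is the substantive one, and I expect it to be the main obstacle. The key elementary fact is that in characteristic $3$, three points with distinct $x$-coordinates satisfying $x_1+x_2+x_3=0$ are collinear if and only if $f(x_1)+f(x_2)+f(x_3)=0$: on a line $y=sx+b$ one has $\sum_i y_i=s\sum_i x_i+3b=0$, while conversely $\sum x_i=0$ together with $\sum y_i=0$ gives $x_3-x_1=-(x_2-x_1)$ and $y_3-y_1=-(y_2-y_1)$, so the two relevant slopes agree. For the ``if'' direction, when $f$ is $\F_3$-linearized and $x_1+x_2+x_3=0$ we get $f(x_1)+f(x_2)+f(x_3)=f(x_1+x_2+x_3)=f(0)=0$, so every affine-triple block lies in $\cB_f$; since both are $\STS(3^m)$'s on $\cV$ and each pair meets a unique block, this containment forces $\cB_f$ to equal the affine triple system. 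For the ``only if'' direction I would first extract that $f(x_1)+f(x_2)+f(x_3)=0$ for all distinct $x_i$ with $\sum x_i=0$ (such triples are affine-blocks, hence collinear, hence constrained by the fact above), and then bootstrap to linearity: testing $\{x,-x,0\}$ gives oddness $f(-x)=-f(x)$; for $x_1\ne x_2$ the triple $\{x_1,x_2,-x_1-x_2\}$ yields $f(x_1+x_2)=f(x_1)+f(x_2)$; and the coincident case $x_1=x_2$ is covered by $f(2x)=f(-x)=-f(x)=2f(x)$, which simultaneously supplies $\F_3$-homogeneity. The real work lies in carefully tracking exactly when the three arguments fail to be distinct and checking that those degenerate cases still deliver additivity.
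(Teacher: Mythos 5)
Your proof is correct and follows essentially the same route as the paper's: the STS property comes from the fact that a line through two points of $G_f$ must meet it in exactly three points (the paper gets this by citing Theorem~\ref{thm-necesuffcubicchar3}, you re-derive it from $v_2(f)=0$ and $v_i(f)=0$ for $i\ge 4$), Part (2) uses $f$ itself as the point bijection, and Part (3) reduces to the characteristic-$3$ computation yielding $f(-x_1-x_2)=-f(x_1)-f(x_2)$, then oddness via $x_2=0$, then additivity. The only differences are cosmetic: you phrase collinearity through the symmetric condition $x_1+x_2+x_3=0$ and $f(x_1)+f(x_2)+f(x_3)=0$ rather than by direct slope manipulation, and you explicitly handle the degenerate case $x_1=x_2$ via $f(2x)=f(-x)=-f(x)=2f(x)$, a point the paper's proof leaves implicit.
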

\begin{proof}
Note that $f$ has intersection distribution \eqref{eqn-intdisdeg3char3}. By Theorem~\ref{thm-necesuffcubicchar3}, for each pair of distinct elements $x_1, x_2 \in \F_{3^m}$, there is a unique $x_3 \in \F_{3^m}$ different from $x_1$ and $x_2$, such that $\frac{f(x_3)-f(x_1)}{x_3-x_1}=\frac{f(x_2)-f(x_1)}{x_2-x_1}$. Hence, $\cB_f$ is well-defined. Since every pair of distinct elements $x_1$ and $x_2$ determines a unique $x_3$, which form a block $\{x_1, x_2, x_3\}$, then $(\cV,\cB_f)$ is an $\STS(3^m)$ by definition.

Part (1) follows easily from the definition of $\cB_f$ and $\cB_{f^{\pr}}$. For Part (2), note that $\{ x_1,x_2,x_3 \} \in \cB_{f}$ if and only if $\{ f(x_1),f(x_2),f(x_3) \} \in \cB_{f^{-1}}$. Therefore, $f$ is a bijection of $\cV$ and induces a bijection between $\cB_{f}$ and $\cB_{f^{-1}}$. Thus, $(\cV,\cB_f)$ and $(\cV,\cB_{f^{-1}})$ are isomorphic. For Part (3), we can see that assuming $f(0)=0$ does not lose any generality by Part (1). If $f$ is $\F_3$-linearized, then $x_3=-x_1-x_2$ is the unique solution to $\frac{f(x)-f(x_1)}{x-x_1}=\frac{f(x_2)-f(x_1)}{x_2-x_1}$, which leads to an affine triple system. Conversely, if $(\cV,\cB_f)$ is an affine triple system, then the summation of the elements in each block is $0$. Hence, for each pair of distinct elements $x_1$ and $x_2$, we have $x_3=-x_1-x_2$ and $\frac{f(-x_1-x_2)-f(x_1)}{x_1-x_2}=\frac{f(x_2)-f(x_1)}{x_2-x_1}$, which implies $f(-x_1-x_2)=-f(x_1)-f(x_2)$. Set $x_2=0$, we have $f(-x_1)=-f(x_1)$ and therefore, $f(-x_1-x_2)=f(-x_1)+f(-x_2)$. Hence, $f$ is an $\F_3$-linearized polynomial over $\F_{3^m}$.
\end{proof}

Combining Theorems~\ref{thm-sameintdiscubic}(2) and \ref{thm-STS}, the affine triple system $\STS(3^m)$ can be derived by using monomial $f(x)=x^{3^i}$ over $\F_{3^m}$, where $(i,m)=1$. We ask if there are other polynomials over $\F_{3^m}$, which produces Steiner triple system nonisomorphic to the affine ones. In view of Conjecture~\ref{conj-sameintdiscubic}(1), we compare the Steiner triple systems derived from the two families in it and the affine triple systems when $m$ is small.

\begin{example}\label{exm-Steiner}
For $m$ being odd, let $f_1(x)=x^3$, $f_2(x)=x^{3^{(m+1)/2}+2}$ and $f_3(x)=x^{2\cdot3^{m-1}+1}$ be polynomials over $\F_{3^m}$. According to Conjecture~\ref{conj-sameintdiscubic}, $f_1$, $f_2$ and $f_3$ have the same intersection distribution for $1 \le m \le 13$, $m$ odd. Let $\cV=\F_{3^m}$. A numerical experiment indicates the following.
\begin{itemize}
\item[(1)] When $m=3$, since the two permutations $f_2(x)=x^{11}$ and $f_3(x)=x^{19}$ are inverses of each other, then by Theorem~\ref{thm-STS}(1), $(\cV,\cB_{f_2})$ and $(\cV,\cB_{f_3})$ are nonisomorphic. Moreover, $(\cV,\cB_{f_1})$ and $(\cV,\cB_{f_2})$ are nonisomorphic.
\item[(2)] When $m=5$, $(\cV,\cB_{f_1})$, $(\cV,\cB_{f_2})$ and $(\cV,\cB_{f_3})$ are pairwise nonisomorphic.
\end{itemize}
Assuming that Conjecture~\ref{conj-sameintdiscubic}(1) is true, we predict that $f_i$, $1 \le i \le 3$, produce three pairwise nonisomorphic $\STS(3^m)$ when $m \ge 5$.
\end{example}

\section{Application to Kakeya sets in affine planes}\label{sec5}

Let $\ell$ be the line at infinity in $PG(2,q)$. For each point $P \in \ell$, define $\ell_P$ to be a line through $P$ other than $\ell$. A \emph{Kakeya set} in $PG(2,q)$ is defined to be the point set
$$
K=(\bigcup_{P \in \ell} \ell_P) \sm \ell.
$$
If we restrict to the affine plane $AG(2,q)=PG(2,q) \sm \ell$, then the Kakeya set $K$ contains an affine line in each direction. So far, most papers concerning Kakeya sets in affine planes focus on Kakeya sets whose sizes are close to the lower and upper bounds \cite{BDMS,BM,BV,DM1,DM2,DMS,Fab}. Note that the construction of Kakeya sets is easy, since for each point $P \in \ell$, we can choose an arbitrary line $\ell_P$ through $P$ other than $\ell$. On the other hand, computing the size of Kakeya set is difficult. In \cite{DM2}, an exhaustive search determines all possible sizes of Kakeya sets in $PG(2,q)$ where $q \le 9$. Inspired by this work, the authors of \cite{LP} proposed explicitly constructions of Kakeya sets with nice underlying algebraic structures, which are derived from monomials over finite fields and have previously unknown sizes. Along this line, we present infinite families of Kakeya sets from degree three polynomials in this section. As a major advantage of our constructions, the sizes of proposed Kakeya sets follow directly from the multiplicity distribution of degree three polynomials, which have been computed in Section~\ref{sec2}. For Kakeya sets in affine spaces with higher dimension, please refer to \cite{Dvir,KLSS,KMW,Mas,Wol}.

First of all, we remark that the concept of intersection distribution can be defined with respect to point sets in classical projective planes $PG(2,q)$ \cite[Definition 1.3]{LP}.

\begin{definition}\label{def-set}
Let $D$ be a point set in $PG(2,q)$. For $0 \le i \le q+1$, define $u_i(D)$ to be the number of lines in $PG(2,q)$, which intersect $D$ in exactly $i$ points. The sequence $(u_i(D))_{i=0}^{q+1}$ is the intersection distribution of $D$. The integer $u_0(D)$ is the non-hitting index of $D$.
\end{definition}

For a $(q+2)$-set $D$ in $PG(2,q)$, a point $P \in D$ is called an \emph{internal nucleus} of $D$, if each line through $P$ intersects $D$ in exactly one more point. The following is an alternative viewpoint to understand Kakeya sets proposed in \cite{BM}.

\begin{lemma}{\rm \cite[Lemma 4.1]{LP}}\label{lem-DK}
Let $K$ be a Kakeya set in $PG(2,q)$, where $K=(\bigcup_{P \in \ell} \ell_P) \sm \ell$. Define the dual Kakeya set $DK$ to be the dual of the $q+2$ lines $\{\ell_P \mid P \in \ell\} \cup \{\ell\}$. Then $DK$ is a $(q+2)$-set in $PG(2,q)$ with an internal nucleus, such that $|K|=q^2-u_0(DK)$.
\end{lemma}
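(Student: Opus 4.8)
The plan is to prove everything through the standard point--line duality $\sigma$ of $PG(2,q)$, which sends each point $X$ to a line $X^*$ and each line $L$ to a point $L^*$, is an involution, and reverses incidence: $X \in L$ if and only if $L^* \in X^*$. Under $\sigma$, the $q+2$ lines $\{\ell_P \mid P \in \ell\} \cup \{\ell\}$ become the $q+2$ points $\{\ell_P^* \mid P \in \ell\} \cup \{\ell^*\} = DK$. First I would check these are distinct: the lines $\ell_P$ are pairwise distinct (they meet $\ell$ in the distinct points $P$) and none equals $\ell$, so $\sigma$ maps them to $q+2$ distinct points, and $DK$ is a $(q+2)$-set.

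Next I would verify that $\ell^*$ is an internal nucleus of $DK$. Dually, the lines through $\ell^*$ are exactly $\{P^* \mid P \in \ell\}$, since a line $M$ satisfies $\ell^* \in M$ iff $M^* \in \ell$ by involution and incidence reversal. Fix $P \in \ell$ and count the points of $DK$ on $P^*$. We have $\ell^* \in P^*$ (as $P \in \ell$), and for $P' \in \ell$, $\ell_{P'}^* \in P^*$ iff $P \in \ell_{P'}$. Since $\ell_{P'}$ is a line through $P'$ different from $\ell$, it meets $\ell$ only in $P'$, so $P \in \ell_{P'}$ iff $P' = P$. Hence $P^*$ contains exactly the two points $\ell^*$ and $\ell_P^*$ of $DK$; every line through $\ell^*$ meets $DK$ in exactly one further point, which is precisely the definition of an internal nucleus.

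Finally, for the size formula I would set up a bijection between the affine points outside $K$ and the lines missing $DK$. An affine point $Q$ (that is, $Q \notin \ell$) lies outside $K$ iff $Q$ lies on none of the $\ell_P$. Translating through $\sigma$: $Q \notin \ell$ becomes $\ell^* \notin Q^*$, and $Q \notin \ell_P$ for all $P$ becomes $\ell_P^* \notin Q^*$ for all $P$. Thus $Q$ is an affine point outside $K$ precisely when the line $Q^*$ avoids every point of $DK$, i.e.\ $Q^*$ is a $0$-secant to $DK$. Conversely every such $0$-secant $L$ dualizes to an affine point $L^*$ outside $K$, since $\ell^* \notin L$ forces $L^* \notin \ell$. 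As $\sigma$ is an involution this is a bijection, so the number of affine points outside $K$ equals $u_0(DK)$. Since the affine plane $PG(2,q) \sm \ell$ has $q^2$ points, $|K| = q^2 - u_0(DK)$.

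The computations here are routine once the duality dictionary is fixed; the one step that needs genuine care is the internal-nucleus verification, where the essential geometric input is that distinct affine lines $\ell_P$ meet $\ell$ in distinct points, so that dually each line through $\ell^*$ picks up exactly one of the $\ell_P^*$. Keeping the incidence reversals consistent throughout, and confirming that ``affine'' on one side corresponds exactly to ``avoids $\ell^*$'' on the other, is the only place where an error could creep in.
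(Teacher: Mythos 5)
Your proof is correct. Note that the paper gives no proof of this lemma at all---it is quoted from \cite{LP} (Lemma 4.1 there)---so there is no internal argument to compare against; your polarity argument (distinctness of the $q+2$ dual points, the count showing each line $P^*$ through $\ell^*$ meets $DK$ exactly in $\{\ell^*, \ell_P^*\}$ so that $\ell^*$ is an internal nucleus, and the duality bijection between affine points off $K$ and $0$-secant lines of $DK$ giving $|K| = q^2 - u_0(DK)$) is the standard proof of this statement, and all of your incidence translations check out.
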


Therefore, computing the size of $K$ amounts to calculating the non-hitting index of the dual Kakeya set $DK$. Moreover, to construct a Kakeya set, it suffices to construct its dual, which is a $(q+2)$-set in $PG(2,q)$ with an internal nucleus. Actually, given a polynomial $f$ over $\Fq$ and $b \in \Fq$, we construct a dual Kakeya set
$$
DK(f,b):=\{ \lan (x,f(x),1) \ran \mid x \in \Fq \} \cup \{ \lan (0,1,0),(1,b,0)\ran \},
$$
which has an internal nucleus $\lan (0,1,0) \ran$. Indeed, the non-hitting index of $DK(f,b)$ follows from the multiplicity distribution of $f$ \cite[Proposition 4.3]{LP}. Consequently, we have the following proposition. Note that for a dual Kakeya set $DK(f,b)$, we use $K(f,b)$ to denote the Kakeya set dual to $DK(f,b)$.

\begin{proposition}\label{prop-intdisKconnection}
For a polynomial $f$ over $\Fq$ and $b \in \Fq$, we have $u_0(DK(f,b))=v_0(f)-M_0(f,b)$ and therefore, $|K(f,b)|=q^2-v_0(f)+M_0(f,b)$. 
\end{proposition}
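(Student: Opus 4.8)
The plan is to reduce the entire statement to a direct count of the lines of $PG(2,q)$ that contain no point of $DK(f,b)$. By Lemma~\ref{lem-DK}, the Kakeya set $K(f,b)$ dual to $DK(f,b)$ satisfies $|K(f,b)|=q^2-u_0(DK(f,b))$, so once the first identity $u_0(DK(f,b))=v_0(f)-M_0(f,b)$ is in hand, the size formula follows by substitution. Thus the whole task is to compute $u_0(DK(f,b))$, and the only work is a careful classification of lines. Throughout I take as given (from the construction preceding the proposition) that $DK(f,b)$ is a $(q+2)$-set with internal nucleus $\lan (0,1,0)\ran$.

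First I would split the lines of $PG(2,q)$ into three types and test each against the two infinity points $\lan (0,1,0)\ran$, $\lan (1,b,0)\ran$ and the affine points $\{\lan (x,f(x),1)\ran \mid x \in \Fq\}$ of $DK(f,b)$. The line at infinity $\ell$ contains both infinity points of $DK(f,b)$, so it meets $DK(f,b)$. Every vertical affine line passes through the internal nucleus $\lan (0,1,0)\ran \in DK(f,b)$ and therefore also meets $DK(f,b)$. This discards $\ell$ together with all $q$ vertical lines, leaving only the $q^2$ non-vertical affine lines $y=b'x+c$ to examine.

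For such a line $y=b'x+c$, its point at infinity is $\lan (1,b',0)\ran$. If $b'=b$, this coincides with $\lan (1,b,0)\ran \in DK(f,b)$, so the line hits $DK(f,b)$ at infinity no matter how it meets the graph. If $b'\ne b$, then $\lan (1,b',0)\ran \notin DK(f,b)$, so the line misses $DK(f,b)$ precisely when it misses the affine graph, i.e. when $f(x)-b'x-c=0$ has no solution in $\Fq$. Hence the lines contributing to $u_0(DK(f,b))$ are exactly the non-vertical affine lines of slope $b'\ne b$ that miss the graph of $f$.

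It remains to count these. By Definition~\ref{def-intdis}, the number of non-vertical affine lines $y=b'x+c$, over all slopes $b'$, for which $f(x)-b'x-c=0$ has no root is $v_0(f)$; by Definition~\ref{def-muldis}, those among them of slope exactly $b$ number $M_0(f,b)$. Subtracting yields $u_0(DK(f,b))=v_0(f)-M_0(f,b)$, and then $|K(f,b)|=q^2-v_0(f)+M_0(f,b)$. There is no deep obstacle here; the single point one must get exactly right is the bookkeeping at infinity: the slope-$b$ lines are forced to hit $DK(f,b)$ through the infinity point $\lan (1,b,0)\ran$, irrespective of their behaviour in the affine plane, and this is precisely the mechanism that removes the $M_0(f,b)$ term from $v_0(f)$.
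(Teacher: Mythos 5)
Your proposal is correct and takes essentially the same route the paper intends: the paper states this proposition without a self-contained argument, deferring to Lemma~\ref{lem-DK} for $|K(f,b)|=q^2-u_0(DK(f,b))$ and to \cite[Proposition 4.3]{LP} for the fact that $u_0(DK(f,b))$ follows from the multiplicity distribution of $f$. Your line classification (the line at infinity and the vertical lines hit the two infinity points, the slope-$b$ lines hit $\lan (1,b,0)\ran$, and the remaining lines miss $DK(f,b)$ exactly when they miss the graph, giving the count $v_0(f)-M_0(f,b)$) is precisely the computation those citations encapsulate.
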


Therefore, the non-hitting index $v_0(f)$ and the intersection distribution $M_0(f,b)$ imply the size of $K(f,b)$. Combining Theorems~\ref{thm-cubicintdis}, \ref{thm-cubicmuldis} and Proposition~\ref{prop-intdisKconnection}, we can obtain the size of some Kakeya sets derived from monomials.

\begin{theorem}\label{thm-cubicKakeya}
\quad
\begin{itemize}
\item[(1)] Suppose $q \equiv 0 \Mod{3}$. Then $|K(x^3-ax^2,b)|=\frac{2q^2+q}{3}$ if $a=0$, $b \notin C_0^{(2,q)}$, or $a \ne 0$, $b \in \Fq$, and $|K(x^3,b)|=\frac{2q^2+3q}{3}$ if $b \in C_0^{(2,q)}$.
\item[(2)] Suppose $q \equiv 1 \Mod{3}$. Then $|K(x^3-ax^2,b)|=\frac{2q^2+2q-1}{3}$ if $a=b=0$, or $a \ne 0$, $\frac{b}{a^2}=-\frac{1}{3}$, and $|K(x^3-ax^2,b)|=\frac{2q^2+q}{3}$ if $a=0$, $b \ne 0$, or $a \ne 0$, $\frac{b}{a^2}\ne-\frac{1}{3}$.
\item[(3)] Suppose $q \equiv 2 \Mod{3}$. Then $|K(x^3-ax^2,b)|=\frac{2q^2+1}{3}$ if $a=b=0$, or $a \ne 0$, $\frac{b}{a^2}=-\frac{1}{3}$, and $|K(x^3-ax^2,b)|=\frac{2q^2+q+2}{3}$ if $a=0$, $b \ne 0$, or $a \ne 0$, $\frac{b}{a^2} \ne -\frac{1}{3}$.
\end{itemize}
\end{theorem}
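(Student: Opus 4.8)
The plan is to invoke Proposition~\ref{prop-intdisKconnection}, which expresses the size of the Kakeya set as $|K(f,b)| = q^2 - v_0(f) + M_0(f,b)$ for $f = x^3 - ax^2$. Both quantities on the right are already known: the non-hitting index $v_0(f)$ is supplied by Theorem~\ref{thm-cubicintdis}, and the single multiplicity value $M_0(f,b)$ by Theorem~\ref{thm-cubicmuldis}. Hence the proof amounts to substituting these values and simplifying, with the cases organized according to the residue of $q$ modulo $3$.

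When $q \equiv 0 \pmod 3$ (that is, $p = 3$), Theorem~\ref{thm-cubicintdis} gives $v_0(f) = \frac{q(q-1)}{3}$ for $a = 0$ and $v_0(f) = \frac{q^2}{3}$ for $a \ne 0$, while Theorem~\ref{thm-cubicmuldis}(2) supplies $M_0(x^3,b) \in \{0, \frac{2q}{3}\}$ according to whether $b \notin C_0^{(2,q)}$ or $b \in C_0^{(2,q)}$, and $M_0(x^3 - ax^2, b) = \frac{q}{3}$ for $a \ne 0$. Substituting yields the three stated values directly. When $q \not\equiv 0 \pmod 3$ we instead use $v_0(f) = \frac{q^2-1}{3}$, valid since $p \ne 3$, and the remaining task is to read off the correct value of $M_0(f,b)$.

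The one point requiring care is that the case conditions of Theorem~\ref{thm-cubicmuldis} are phrased in terms of $p \bmod 12$ and the parity of $m$, whereas here the classification is by $q \bmod 3$. I would first record the reconciling observation that $q \equiv 1 \pmod 3$ lands precisely in the branch of Theorem~\ref{thm-cubicmuldis}(3) governed by the parameter $j$ — equivalently the $m$-even branch of part (1) when $p = 2$ — and that $q \equiv 2 \pmod 3$ lands in the branch governed by $k$, equivalently the $m$-odd branch for $p = 2$. Within each such branch the two quadratic-character subcases give the \emph{same} value of $M_0(f,b)$, namely $\frac{q-1}{3}$ when $q \equiv 1$ and $\frac{q+1}{3}$ when $q \equiv 2$; the value departs from this only in the distinguished case $\frac{b}{a^2} = -\frac{1}{3}$ (together with $a = b = 0$), where $M_0$ equals $\frac{2(q-1)}{3}$ or $0$ respectively. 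I would also note that in characteristic $2$ one has $-\frac{1}{3} = 1$, so the distinguished condition coincides with the $\frac{b}{a^2} = 1$ appearing in Theorem~\ref{thm-cubicmuldis}(1). This collapse is exactly what forces the clean two-case form of the final answer.

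With these matchings fixed, all that remains is the elementary arithmetic of evaluating $q^2 - \frac{q^2-1}{3} + M_0$: the distinguished case gives $\frac{2q^2+2q-1}{3}$ for $q \equiv 1$ and $\frac{2q^2+1}{3}$ for $q \equiv 2$, while the generic case gives $\frac{2q^2+q}{3}$ and $\frac{2q^2+q+2}{3}$ respectively. I anticipate no substantive obstacle, only the bookkeeping of aligning the case conditions across the two different classification schemes; since Theorem~\ref{thm-cubicmuldis} has already absorbed the genuine arithmetic-of-cubics difficulty, this theorem is in effect a transcription of those results through Proposition~\ref{prop-intdisKconnection}.
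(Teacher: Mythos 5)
Your proposal is correct and follows exactly the paper's own route: the paper proves this theorem precisely by combining Proposition~\ref{prop-intdisKconnection} ($|K(f,b)|=q^2-v_0(f)+M_0(f,b)$) with the values of $v_0(f)$ from Theorem~\ref{thm-cubicintdis} and $M_0(f,b)$ from Theorem~\ref{thm-cubicmuldis}. Your reconciliation of the case conditions (the $j$-branch, respectively $k$-branch, of Theorem~\ref{thm-cubicmuldis}(3) corresponding to $q\equiv 1$, respectively $q\equiv 2 \Mod{3}$, and $-\tfrac{1}{3}=1$ in characteristic $2$) and the resulting arithmetic are all accurate.
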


In Table~\ref{tab-Kakeya}, we list the known sizes of Kakeya sets in $PG(2,q)$, with prime power $q \le 19$. When $q \le 9$, all possible sizes follow from the exhaustive search in \cite[Table 1]{DM2}. When $11 \le q \le 19$, we only list the known sizes realizable by explicit constructions. We note that when the sizes of the Kakeya sets are close to the lower and upper bounds, there have been a series of literature concerning their construction and characterization \cite{BDMS,BM,BV,DM1,DM2,DMS,Fab}. In the table, an entry with superscript $\blacksquare$ represents the size of Kakeya sets following from the explicit constructions in Theorem~\ref{thm-cubicKakeya}, which are unknown before. An entry with superscript $\bigstar$ represents the size of Kakeya sets which do not have explicit constructions so far. An entry without superscript represents the size of Kakeya sets with known explicit constructions before.

\begin{table}[H]
\ra{1.3}
\caption{The known sizes of Kekaya set in $PG(2,q)$, for prime power $q \le 19$}
\label{tab-Kakeya}
\begin{center}
\begin{tabular}{|c|c|}
\hline
$q$ & Sizes of Kekaya sets \\ \hline
$2$ & $3$, $4$ \\ \hline
$3$ & $7$, $9$ \\ \hline
$4$ & $10$, $12$, $13$, $16$ \\ \hline
$5$ & $17$, $18$, $19$, $21$, $25$ \\ \hline
$7$ & $31$, $32^\bigstar$, $33$, $34$, $35$, $36^\bigstar$, $37$, $39$, $43$, $49$ \\ \hline
$8$ & $36$, $40$, $42$, $43$, $44^\bigstar$, $45^\bigstar$, $46$, $47^\bigstar$, $48$, $49$, $52$, $57$, $64$ \\ \hline
$9$ & $49$, $51^\bigstar$, $52$, $53$, $54$, $55$, $56^\bigstar$, $57$, $58^\bigstar$, $59^\bigstar$, $60^\bigstar$, $61$, $62^\bigstar$, $63$, $67$, $73$, $81$ \\ \hline
$11$ & $71$, $75$, $77$, $81^\blacksquare$, $85$, $86$, $87$, $91$, $93$, $97$, $103$, $111$, $121$ \\ \hline
$13$ & $97$, $103$, $112$, $115$, $117^\blacksquare$, $121$, $127$, $129$, $133$, $139$, $147$, $157$, $169$ \\ \hline
$16$ & $136$, $144$, $148$, $150$, $160$, $166$, $176$, $181$, $192$, $193$, $196$, $201$, $208$, $217$, $228$, $241$, $256$ \\ \hline
$17$ & $161$, $169$, $189$, $193^\blacksquare$, $199^\blacksquare$, $200$, $209$, $217$, $219$, $223$, $229$, $237$, $247$, $259$, $273$, $289$ \\ \hline
$19$ & $199$, $207$, $209$, $247^\blacksquare$, $253^\blacksquare$, $259$, $261$, $262$, $271$, $273$, $277$, $283$, $291$, $301$, $313$, $327$, $343$, $361$\\ \hline
\end{tabular}
\end{center}
\end{table}

\section{Conclusion}\label{sec6}

In this paper, we determined the multiplicity distribution of polynomials with the form $x^3-ax^2$, which gives the intersection distribution of each degree three polynomial. Inspired by the famous open problem of classifying o-polynomials, we initiated to classify all monomials having the same intersection distribution as $x^3$ and made some progress along this line. Interestingly, when $p=3$, numerical experiment indicated that some monomials with the same intersection distribution as $x^3$ led to nonisomorphic Steiner triple systems. Finally, The multiplicity distribution of $x^3-ax^2$ generated several families of Kakeya sets, whose sizes are different comparing with the known ones.

Except Conjecture~\ref{conj-sameintdiscubic}, we think the following three problems deserve further investigation.

\begin{itemize}
\item[(1)] In Table~\ref{tab-monononhitting}, the non-hitting indices of certain monomials have not been well understood. Therefore, it is interesting to give a theoretical explanation for these non-hitting indices.

\item[(2)] In Example~\ref{exm-Steiner}, the fact that the Steiner triple systems being nonisomorphic follows from a numerical computation. A theoretic proof confirming the nonisomorphism, even only for small values of $m$, could be very enlightening.

\item[(3)] In Table~\ref{tab-Kakeya}, there are a few Kakeya sets having no theoretical constructions, whose sizes are only known by numerical experiment. We ask for explicit constructions for these Kakeya sets.
\end{itemize}

We finally mention a recent work due to Ding and Tang \cite{DT}, in which polynomials over finite fields were employed to construct combinatorial $t$-designs. While determining the parameters of the $t$-design arising from a polynomial $f$ is difficult in general \cite{DT}, we note that the multiplicity distribution of $f$ implies the parameters of the associated $t$-design. Therefore, this design-theoretic application supplies one more motivation to study the multiplicity distribution of polynomials over finite fields.

\section*{Acknowledgement}

Shuxing Li is supported by the Alexander von Humboldt Foundation. This project was initiated during the second and third authors visited the first author at University of Rostock. They wish to thank the Institute of Mathematics for the great hospitality. The second author wishes to thank Christian Kaspers, Otto von Guerick University of Magdeburg, for his kind help with the numerical experiments in Conjecture~\ref{conj-sameintdiscubic}.

\end{document}